\documentclass[11pt]{article}

\usepackage{amsmath, amsthm, amssymb}
\usepackage{bm}
\usepackage{graphicx}
\usepackage{calc}
\usepackage{mathtools}
\usepackage{enumerate}
\usepackage{hyperref}
\usepackage[margin=1in]{geometry}

\title{The Product of Gaussian Matrices is Close to Gaussian} 

\author{Yi Li\footnote{Supported in part by Singapore Ministry of Education (AcRF) Tier 2 grant MOE2018-T2-1-013}\\
Division of Mathematical Sciences\\
Nanyang Technological University\\
yili@ntu.edu.sg
\and
David P. Woodruff\footnote{Supported in part by Office of Naval Research (ONR) grant N00014-18-1-256 and a Simons Investigator Award}\\
Department of Computer Science\\
Carnegie Mellon University\\
dwoodruf@andrew.cmu.edu
}

\newcommand*{\eqdefU}{\ensuremath{\mathop{\overset{\text{ }d\text{ }}{=}}}}
\newcommand*{\eqdist}{\mathop{\overset{\text{ }d\text{ }}{\resizebox{\widthof{\eqdefU}}{\heightof{=}}{=}}}}

\newcommand{\R}{\mathbb{R}}

\DeclareMathOperator*{\E}{\mathbb{E}}
\DeclareMathOperator{\tr}{tr}
\DeclareMathOperator*{\Var}{Var}
\DeclareMathOperator{\cov}{cov}

\newcommand{\norm}[1]{\left\| #1 \right\|}
\newcommand{\cD}{\mathcal{D}}
\newcommand{\cF}{\mathcal{F}}
\newcommand{\cG}{\mathcal{G}}
\newcommand{\cO}{\mathcal{O}}
\newcommand{\KL}{\mathrm{KL}}
\newcommand{\Tau}{\mathrm{T}}

\theoremstyle{plain}
\newtheorem{lemma}{Lemma}
\newtheorem{theorem}{Theorem}
\newtheorem{proposition}{Proposition}
\newtheorem{corollary}{Corollary}

\theoremstyle{definition}

\theoremstyle{remark}

\allowdisplaybreaks

\begin{document}

\maketitle

\begin{abstract}
    We study the distribution of the {\it matrix product} $G_1 G_2 \cdots G_r$ of $r$ independent Gaussian matrices of
    various sizes, where $G_i$ is $d_{i-1} \times d_i$, and we denote $p = d_0$, $q = d_r$, and require $d_1 = d_{r-1}$. Here
    the entries in each $G_i$ are standard normal random variables with mean $0$ and variance $1$. Such products arise in the
    study of wireless communication, dynamical systems, and quantum transport, among other places. We show that, provided
    each $d_i$, $i = 1, \ldots, r$, satisfies $d_i \geq C p \cdot q$, where $C \geq C_0$ for a constant $C_0 > 0$ 
    depending on $r$, then the matrix
    product $G_1 G_2 \cdots G_r$ has variation distance at most $\delta$ to a $p \times q$ matrix $G$ of i.i.d.\ standard
    normal random variables with mean $0$ and variance $\prod_{i=1}^{r-1} d_i$. Here $\delta \rightarrow 0$ as $C \rightarrow \infty$. Moreover, we show a converse for constant $r$ that
    if $d_i < C' \max\{p,q\}^{1/2}\min\{p,q\}^{3/2}$ for some $i$,
    then this total variation distance is at least $\delta'$, for an absolute constant $\delta' > 0$ depending on $C'$ and $r$. This converse is best possible when $p=\Theta(q)$.
\end{abstract}

\section{Introduction}
Random matrices play a central role in many areas of theoretical, applied, and computational mathematics. One particular
application is dimensionality reduction, whereby one often chooses a rectangular random matrix $G \in \mathbb{R}^{m \times n}$,
$m \ll n$, and computes $G \cdot x$ for a fixed vector $x \in \mathbb{R}^n$. Indeed, this is the setting in compressed
sensing and sparse recovery \cite{donoho2006compressed}, randomized numerical linear algebra \cite{kannan2009spectral,mahoney2011randomized,woodruff2014sketching}, 
and sketching algorithms for data streams
\cite{muthukrishnan2005data}. 
Often $G$ is chosen to be a Gaussian matrix, and in particular, an $m \times n$ matrix with entries that are i.i.d.\ 
normal random variables with mean $0$ and variance $1$, denoted by $N(0,1)$. Indeed, in compressed sensing, such matrices can be
shown to satisfy the Restricted Isometry Property (RIP) \cite{candes2008restricted}, while in randomized numerical linear algebra, in certain applications such as support vector machines \cite{paul2014random} and non-negative matrix factorization \cite{kapralov2016fake}, their performance is shown to often outperform that of other sketching matrices. 

Our focus in this paper will be on understanding the {\it product} of two or more Gaussian matrices. Such products arise
naturally in different applications. For example, in the over-constrained ridge regression 
problem $\min_x \|Ax-b\|_2^2 + \lambda \|x\|_2^2$, 
the design matrix $A \in \mathbb{R}^{n \times d}$, $n \gg d$, 
is itself often assumed to be Gaussian (see, e.g., \cite{obozinski2011support}). 
In this case, the ``sketch-and-solve'' algorithmic framework for regression \cite{sarlos2006improved}
would compute $G \cdot A$ and $G \cdot b$ for an $m \times n$ Gaussian matrix $G$ with $m \approx sd_{\lambda}$, where 
$sd_{\lambda}$ is the so-called statistical dimension \cite{alaoui2014fast}, and solve for the $x$
which minimizes $\|G \cdot A x - G \cdot b\|_2^2 + \lambda \|x\|_2^2$. 
While computing $G \cdot A$ is slower than computing the corresponding matrix product for other kinds of sketching matrices
$G$, it often has application-specific  \cite{paul2014random,kapralov2016fake} as well as statistical benefits \cite{raskutti2016statistical}. Notice
that $G \cdot A$ is the product of two independent Gaussian matrices, and in particular, $G$ has a small number of rows while
$A$ has a small number of columns -- this is precisely the rectangular case we will study below. Other applications in
randomized numerical linear algebra where the product of two Gaussian matrices arises is when one computes the product of a Gaussian
sketching matrix and Gaussian noise in a spiked identity covariance model \cite{yang2020reduce}. 

The product of two or more Gaussian matrices also arises in diverse fields such as multiple-input multiple-output (MIMO)
wireless communication channels \cite{muller2002asymptotic}. Indeed, 
similar to the above regression problem in which one wants to reconstruct an underlying vector $x$, 
in such settings one observes the vector $y = G_1 \cdots G_r \cdot x + \eta,$ where $x$ is the
transmitted signal and $\eta$ is background noise. This setting corresponds to the situation in which there are $r$ scattering
environments separated by major obstacles, and the dimensions of the $G_i$ correspond to the number of ``keyholes'' \cite{muller2002asymptotic}. 
To determine the mutual information of this channel, one needs to understand the singular values of the matrix $G_1 \cdots G_r$. 
If one can show the distribution of this product is close to that of a Gaussian distribution in total variation distance, then
one can use the wide range of results known for the spectrum of a single Gaussian matrix (see, e.g., 
\cite{vershynin2011introduction}). 
Other applications of products of Gaussian matrices include disordered spin chains \cite{crisanti2012products,bellman1954limit,ishitani1977central}, stability of large complex
dynamical systems \cite{may1972will,majumdar2014top}, symplectic maps and Hamiltonian mechanics \cite{crisanti2012products,benettin1984power,paladin1986scaling}, quantum transport in disordered wires \cite{mello1988macroscopic,iida1990statistical}, and
quantum chromodynamics \cite{osborn2004universal}; we refer the reader to \cite{ipsen2015products,akemann2015recent} for an overview. 

The main question we ask in this work is: 
\begin{center}
{\it What is the distribution of the product $G_1  G_2 \cdots G_r$ of $r$ independent Gaussian matrices of
    various sizes, where $G_i$ is $d_{i-1} \times d_i$?} 
\end{center}
Our main interest in the question above will be when $G_1$ has a small number $p = d_0$ of rows, and $G_r$ has a small number $q = d_{r}$ of columns. 
Despite the large body of work on random matrix theory (see, e.g., \cite{tao2012topics} 
for a survey), 
we are not aware of any work which attempts to bound the total variation distance of the entire distribution of 
$G_1  G_2 \cdots G_r$ to a Gaussian distribution itself.

\subsection{Our Results}
Formally, we consider the problem of distinguishing the product of normalized Gaussian matrices
\[
A_r = \left(\frac{1}{\sqrt{d_1}}G_1\right)\left(\frac{1}{\sqrt{d_2}}G_2\right)\cdots \left(\frac{1}{\sqrt{d_{r-1}}}G_{r-1}\right) \left(\frac{1}{\sqrt{d_1}}G_r\right)
\]
from a single normalized Gaussian matrix
\[
A_1 = \frac{1}{\sqrt{d_1}} G_1.
\]
We show that, when $r$ is a constant, with constant probability we cannot distinguish the distributions of these two random
matrices when $d_i\gg p,q$ for all $i$; and, conversely, with constant probability, we can distinguish these two distributions when the $d_i$ are not large enough. 
\begin{theorem}[Main theorem]
Suppose that $d_i\geq \max\{p,q\}$ for all $i$ and $d_{r-1} = d_1$.
\begin{enumerate}[(a)]
    \item It holds that \[
    d_{TV}(A_r, A_1) \leq C_1\sum_{i=1}^{r-1} \sqrt{\frac{pq}{d_i}},
    \]
    where $d_{TV}(A_r,A_1)$ denotes the total variation distance between $A_r$ and $A_1$, and $C_1 > 0$ is an absolute constant.
    \item If $p,q,d_1,\dots,d_r$ further satisfy that
    \[
    \sum_{j=1}^{r-1} \frac{1}{d_j}\geq\frac{C_2^r}{\max\{p,q\}^{\frac12}\min\{p,q\}^{\frac32}},
    \]
    where $C_2 > 0$ is an absolute constant, then $d_{TV}(A_r, A_1) \geq 2/3$.
\end{enumerate}
\end{theorem}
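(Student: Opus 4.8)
For part (b) I would separate the two laws by a single polynomial statistic that concentrates under each but around values whose gap dwarfs both standard deviations, and then invoke Chebyshev's inequality. Since total variation is unchanged by the common rescaling $X\mapsto\sqrt{d_1}\,X$, it is equivalent to distinguish $B_1$, a $p\times q$ matrix of i.i.d.\ $N(0,1)$ entries, from $B_r=(d_1 d_2\cdots d_{r-1})^{-1/2}\,G_1 G_2\cdots G_r$, the product normalized to unit‑variance entries. I will use
\[
T(X)\;=\;\tr\!\bigl((XX^{\top})^2\bigr)\;=\;\|X^{\top}X\|_F^2\;=\;\sum_i\sigma_i(X)^4 .
\]
The heuristic is that the product's fourth‑moment excess surfaces in $\E[T]$ at scale $(pq)^2\sum_j 1/d_j$, which is precisely the threshold $(\max\{p,q\}^{1/2}\min\{p,q\}^{3/2})^{-1}$ appearing in the hypothesis of part (b).

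The first moment is obtained from a short recursion. Writing $P_j:=G_jG_{j+1}\cdots G_r$ (with $P_{r+1}:=I_q$) and $Y_j:=P_jP_j^{\top}$, one has $Y_j=G_jY_{j+1}G_j^{\top}$ and $T(B_r)=(d_1\cdots d_{r-1})^{-2}\tr(Y_1^2)$. Wick's formula for the $d_{j-1}\times d_j$ Gaussian $G_j$ gives the exact identities $\E[\tr Y_j^2\mid Y_{j+1}]=d_{j-1}(d_{j-1}+1)\tr Y_{j+1}^2+d_{j-1}(\tr Y_{j+1})^2$ and $\E[(\tr Y_j)^2\mid Y_{j+1}]=2d_{j-1}\tr Y_{j+1}^2+d_{j-1}^2(\tr Y_{j+1})^2$, so the pair $\bigl(\E[\tr Y_j^2],\,\E[(\tr Y_j)^2]\bigr)$ obeys a $2\times 2$ linear recursion whose transition at link $j$ is $d_{j-1}\,(d_{j-1}I_2+R)$ with $R=\bigl(\begin{smallmatrix}1&1\\2&0\end{smallmatrix}\bigr)$, started from $(q,q^2)$. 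The scalar prefactors multiply to $p\,d_1\cdots d_{r-1}$ ($d_0:=p$), while the matrix parts telescope to $\sum_{k=0}^{r}e_{r-k}(p,d_1,\dots,d_{r-1})\,R^k$ with $e_m$ the elementary symmetric polynomial; since all $e_m$ and all entries of $R^k$ are nonnegative, reading off the first coordinate and comparing with the single‑Gaussian value $\E[T(B_1)]=pq(p+q+1)$ (a standard Wishart moment) yields
\[
\Delta\;:=\;\E[T(B_r)]-\E[T(B_1)]\;\ge\;(pq)^2\sum_{j=1}^{r-1}\frac1{d_j}.
\]
The variance of $T(B_1)$ is the classical fourth‑cumulant computation for a real Wishart matrix: $\Var(T(B_1))=O\!\bigl(pq(p+q)^2\bigr)=O\!\bigl(pq\max\{p,q\}^2\bigr)$.

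The crux — and the step I expect to be the main obstacle — is bounding $\Var(T(B_r))$. I would push the recursion to second moments: by Wick's formula for $G_1$, $\E[(\tr Y_1^2)^2\mid Y_2]$ is a fixed nonnegative linear combination of the five degree‑four power‑sum monomials $\tr Y_2^4$, $\tr Y_2^3\,\tr Y_2$, $(\tr Y_2^2)^2$, $\tr Y_2^2\,(\tr Y_2)^2$, $(\tr Y_2)^4$ (the partitions of $4$), so the vector of their expectations satisfies a $5\times 5$ linear recursion along the chain with explicit transitions at each link and deterministic base point $Y_{r+1}=I_q$. Solving — or carefully bounding — this matrix product and subtracting $(\E[T(B_r)])^2$ should yield $\Var(T(B_r))=o(\Delta^2)$ under the stated hypotheses (in fact a small absolute‑constant multiple of $\Delta^2$ is all that is needed); equivalently one can induct on $r$ via the law of total variance, $\Var(\tr Y_1^2)=\E\!\bigl[\Var(\tr Y_1^2\mid Y_2)\bigr]+\Var\!\bigl(p(p{+}1)\tr Y_2^2+p(\tr Y_2)^2\bigr)$, the second term reducing to the variances and covariance of $\tr Y_2^2$ and $(\tr Y_2)^2$, which recurse. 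The delicate points are ruling out large cancellations in the $5\times 5$ product and certifying that each surviving term is a small fraction of $\Delta^2$; this uses both the part‑(b) hypothesis and the standing assumption $d_i\ge\max\{p,q\}$, which together force $\max\{p,q\}\lesssim\min\{p,q\}^3$ and thereby keep the unbalanced regime under control.

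To finish part (b), set $\mu_1:=\E[T(B_1)]$: Chebyshev gives $\Pr_{B_1}[\,|T-\mu_1|\le\Delta/2\,]\ge 1-4\Var(T(B_1))/\Delta^2$ and $\Pr_{B_r}[\,|T-\mu_1|\le\Delta/2\,]\le 4\Var(T(B_r))/\Delta^2$, hence $d_{TV}(A_r,A_1)=d_{TV}(B_r,B_1)\ge 1-4\bigl(\Var(T(B_1))+\Var(T(B_r))\bigr)/\Delta^2$. Since $\Delta\ge(pq)^2\sum_j 1/d_j\ge C_2^r\,\max\{p,q\}^{3/2}\min\{p,q\}^{1/2}$ while $\sqrt{\Var(T(B_1))}=O\!\bigl(\max\{p,q\}^{3/2}\min\{p,q\}^{1/2}\bigr)$, choosing the absolute constant $C_2$ large enough (to absorb this $O(1)$ together with the $r$‑dependent constant from the $\Var(T(B_r))$ bound) drives the right‑hand side above $2/3$; a handful of genuinely small cases ($p$ or $q$ below a constant depending on $r$) are checked directly. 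For part (a), the natural route is the distributional identity $A_r\stackrel{d}{=}\frac1{\sqrt{d_1}}\widetilde G\,(N^{\top}N)^{1/2}$, where $\widetilde G$ is a fresh $p\times q$ i.i.d.\ $N(0,1)$ matrix and $N=\frac1{\sqrt{d_2}}G_2\cdots\frac1{\sqrt{d_1}}G_r$ is independent of it (the rows of $G_1N$ being i.i.d.\ $N(0,N^{\top}N)$), which one iterates to peel off one factor at a time; conditionally on $N$ the law of $A_r$ is a centered Gaussian with covariance $\frac1{d_1}N^{\top}N$, and bounding an information divergence against $\frac1{d_1}I_q$ using $\E\|N^{\top}N-I_q\|_F^2=O(q^2/d_1)$ at the first link and analogous estimates deeper in, then summing the per‑link contributions, produces the claimed $\sum_i\sqrt{pq/d_i}$ rate.
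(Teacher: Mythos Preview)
Your plan matches the paper's: distinguish via $T(X)=\tr((X^\top X)^2)$ and Chebyshev. Your $2\times2$ recursion on $\bigl(\E\tr Y_j^2,\E(\tr Y_j)^2\bigr)$ is correct (it coincides with the paper's Steps~1b and~2b) and is in fact tidier than the six-quantity entrywise recursion the paper writes first; it yields the same $\Delta\approx(pq)^2\sum_j 1/d_j$. The divergence is in the variance bound. The paper explicitly considers and rejects the direct second-moment route --- computing $\E[(\tr Y_1^2)^2]$ (your $5\times5$ recursion) and subtracting $(\E\tr Y_1^2)^2$ --- because the leading $\Theta((pq)^4)$ terms cancel and one must track subleading pieces to $O(pq\max\{p,q\}^2)$ accuracy through an $r$-fold matrix product; your caveat about ``ruling out large cancellations'' is precisely this obstruction, and you have not said how to overcome it. The paper instead combines the law of total variance (your alternative) with the Gaussian \emph{Poincar\'e inequality} applied to $G\mapsto\tr((G^\top MG)^2)$ and $G\mapsto\tr^2(G^\top MG)$: this bounds each conditional variance by $\E\|\nabla\|_2^2$, a degree-four rather than degree-eight quantity in $M$, so the recursion stays at fourth order and no cancellations need to be tracked. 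Without Poincar\'e or an analogous device, your conditional-variance term $\E[\Var(\tr Y_1^2\mid Y_2)]$ is still an eighth-moment computation and the cancellation problem reappears.

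\textbf{Part (a).} Here there is a genuine rate loss. Conditioning on $N$ and comparing $N(0,\tfrac1{d_1}N^\top N)^{\otimes p}$ with $N(0,\tfrac1{d_1}I_q)^{\otimes p}$ in KL gives, after the quadratic expansion, $D_{\KL}\lesssim p\,\E\|N^\top N-I_q\|_F^2$. But your own mean recursion applied to $N$ (outer dimensions $d_1,q$) shows $\E\|N^\top N-I_q\|_F^2=\E\tr((N^\top N)^2)-q\asymp q^2\sum_{j=1}^{r-1}1/d_j$, so Pinsker yields at best $d_{TV}\lesssim\sum_j\sqrt{pq^2/d_j}$ --- off by $\sqrt{q}$ from the claim (or $\sqrt{p}$ if you condition from the other side; $\sqrt{\min\{p,q\}}$ at best). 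The paper gets the sharp $\sqrt{pq/d_i}$ by a different mechanism: write $W_1\cdots W_r=U\Sigma V^\top W_2\cdots W_r\eqdist U\Sigma X$ with $X$ a $p\times q$ scaled Gaussian (rotational invariance), replace $X$ by the $p\times q$ block of a Haar orthogonal $d\times d$ matrix at KL cost $O(pq/d)$ via the Jiang--Ma estimate (Lemma~\ref{lem:jiang_KL}), and then re-absorb the orthogonal factors back into the SVD. It is this orthogonal-submatrix approximation --- not a Gaussian covariance comparison --- that delivers the correct per-link rate; the covariance route cannot match it.
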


Part (a) states that $d_{TV}(A_r,A_1) < 2/3$ when $d_i\geq C_1' p q$ for all $i$ for a constant $C_1'$ depending on $r$. The converse in (b) implies that $d_{TV}(A_r,A_1) \geq 2/3$ when $d_i\leq C_2' \max\{p,q\}^{1/2}\min\{p,q\}^{3/2}$ for some $i$ for a constant $C_2'$ depending on $r$. When $p=\Theta(q)$ and $r$ is a constant, we obtain a dichotomy (up to a constant factor) for the conditions on $p,q$ and $d_i$.

\subsection{Our Techniques} 
\subparagraph{Upper Bound.} We start by explaining our main insight as to why the distribution of 
a product $G_1 \cdot G_2$ of a $p \times d$ matrix $G_1$ of i.i.d.\ $N(0,1)$ random variables and a $d \times q$ matrix $G_2$ of i.i.d. $N(0,1)$ random variables has low variation distance to 
the distribution of a $p \times q$ matrix $A$ of i.i.d.\ $N(0,d)$ random variables. One could try to directly understand
the probability density function as was done in the case of Wishart matrices in  \cite{bubeck2015testing,racz2016smooth}, which
corresponds to the setting when $G_1 = G_2$. 
However, there are certain algebraic simplifications in the case of the Wishart distribution that seem much less tractable when manipulating the density function of the product of independent
Gaussians \cite{Burda_2010}. Another approach would be to try to use entropic methods as in \cite{bubeck2018entropic,brennan2020phase}. Such
arguments try to reveal entries of the product $G_1 \cdot G_2$ one-by-one, arguing that for most conditionings of previous
entries, the new entry still looks like an independent Gaussian. However, the entries are clearly not independent -- if
$(G_1 \cdot G_2)_{i,j}$ has large absolute value, then $(G_1 \cdot G_2)_{i, j'}$ is more likely to be large in absolute
value, as it could indicate that the $i$-th row of $G_1$ has large norm. One could try to first condition on the norms of
all rows of $G_1$ and columns of $G_2$, but additional issues arise when one looks at submatrices: if $(G_1 \cdot G_2)_{i,j}, (G_1 \cdot G_2)_{i, j'},$ and $(G_1 \cdot G_2)_{i', j}$ are all large, then it could mean the
$i$-th row of $G_1$ and the $i'$-th row of $G_1$ are correlated with each other, since they both are correlated with the $j$-th
column of $G_2$. Consequently, since $(G_1 \cdot G_2)_{i, j'}$ is large, it could make it more likely that
$(G_1 \cdot G_2)_{i', j'}$ has large absolute value. This makes the entropic method difficult to apply in this context.

Our upper bound instead leverages beautiful work of Jiang \cite{Jiang_2006} and Jiang and Ma \cite{jiang:late} which bounds
the total variation distance between the distribution of an $r \times \ell$ submatrix of a random 
$d \times d$ orthogonal matrix (orthonormal rows and columns) and an $r \times \ell$ matrix with i.i.d.\ $N(0, 1/d)$ entries. Their
work shows that if $r \cdot \ell / d \to 0$ as $d\to \infty$, then the total variation
distance between these two matrix ensembles goes to $0$. It is not immediately clear how to apply such results in our context. 
First of all, which submatrix should we be looking at? Note though, that if $V^T$ is a $p \times d$ uniformly 
random (Haar measure) 
matrix with orthonormal rows, and $E$ is a $d \times q$ uniformly random matrix with orthonormal columns, then by rotational invariance,
$V^T E$ is identically distributed to a $p \times q$ submatrix of a $d \times d$ random orthonormal matrix. Thus, 
setting $r = p$ and $\ell = q$ in the above results, they imply that $V^T E$ is close in variation distance to a $p \times q$
matrix $H$ with i.i.d. $N(0,1/d)$ entries. Given $G_1$ and $G_2$, one could then write them in their {\it singular value decomposition}, obtaining $G_1 = U \Sigma V^T$ and $G_2 = E \Tau F^T$. Then $V^T$ and $E$ are independent and well-known
to be uniformly random $p \times d$ and $d \times q$ orthonormal matrices, respectively. Thus $G_1 \cdot G_2$ is close in total 
variation distance to $U \Sigma H \Tau F^T$. However, this does not immediately help either, as it is not clear what the distribution of this
matrix is. Instead, the ``right'' way to utilize the results above is to (1) observe that $G_1 \cdot G_2 = U \Sigma V^T G_2$ is identically distributed as $U \Sigma X$, where $X$ is a matrix of i.i.d.\ normal random variables, given the rotational invariance of the Gaussian
distribution. Then (2) $X$ is itself close to a product $W^T Z$ where $W^T$ is a random $p \times d$ matrix with orthonormal
rows, and $Z$ is a random $d \times q$ matrix with orthonormal columns, by the above results. Thus, $G_1 \cdot G_2$ is close
to $U \Sigma W^T Z$. Then (3) $U \Sigma W^T$ has the same distribution as $G_1$, so $U \Sigma W^T Z$ is close to $G_1' Z$, where
$G_1'$ and $G_1$ are identically distributed, and $G_1'$ is independent of $Z$. Finally, (4) $G_1' Z$ is identically distributed as a matrix $A_1$ of standard normal random variables because $G_1'$ is Gaussian and $Z$ has orthonormal columns, by rotational invariance
of the Gaussian distribution. 

We hope that this provides a general method for arguments involving Gaussian matrices - in step (2)
we had the quantity $U \Sigma X$, where $X$ was a Gaussian matrix, and then viewed $X$ as a product of a short-fat
random orthonormal matrix $W^T$ and a tall-thin random orthonormal matrix $Z$. 
Our proof for the product of more than $2$ matrices recursively uses similar ideas, and bounds the growth in variation distance
as a function of the number $r$ of matrices involved in the product. 

\subparagraph{Lower Bound.}
For our lower bound for constant $r$, we show that the fourth power of the Schatten 4-norm of a matrix, namely, $\|X\|_{S_4}^4 = \tr((X^T X)^2)$, can be used to distinguish a product $A_r$ of $r$ Gaussian matrices and a single Gaussian 
matrix $A_1$. We use Chebyshev's inequality, for which we need to find the expectation and variance of $\tr((X^T X)^2)$ for $X=A_r$ and $X=A_1$. 

Let us consider the expectation first. An idea is to calculate the expectation recursively, that is, for a fixed matrix $M$ and a Gaussian random matrix $G$ we express $\E\tr(((MG)^T (MG))^2)$ in terms of $\E\tr((M^T M)^2)$. The real situation turns out to be slightly more complicated. Instead of expressing $\E\tr(((MG)^T (MG))^2)$ in terms of $\E\tr((M^T M)^2)$ directly, we decompose $\E\tr(((MG)^T (MG))^2)$ into the sum of expectations of a few functions in terms of $M$, say,
\[
\E\tr(((MG)^T (MG))^2) = \E f_1(M) + \E f_2(M) + \cdots + \E f_s(M)
\]
and build up the recurrence relations for $\E f_1(MG),\dots,\E f_s(MG)$ in terms of $\E f_1(M)$, $\E f_2(M)$, ..., $\E f_s(M)$. It turns out that the recurrence relations are all linear, i.e.,
\begin{equation}\label{eqn:decomposition}
\E f_i(MG) = \sum_{j=1}^s a_{ij} \E f_j(M),\quad i = 1,\dots, s,
\end{equation}
whence we can solve for $\E f_i(A_r)$ and obtaining the desired expectation $\E\tr((A_r^T A_r)^2)$.

Now we turn to variance. One could try to apply the same idea of finding recurrence relations for $\Var(Q) = \E(Q^2) - (\E Q)^2$ (where $Q = \tr(((MG)^T(MG))^2)$), but it quickly becomes intractable for the $\E(Q^2)$ term as it involves products of eight entries of $M$, which all need to be handled carefully as to avoid any loose bounds; note, the subtraction of $(\E Q)^2$ is critically needed to obtain a small upper bound on $\Var(Q)$ and thus loose bounds on $\E(Q^2)$ would not suffice. For a tractable calculation, we keep the product of entries of $M$ to $4$th order throughout, without involving any terms of $8$th order.
To do so, we invoke the law of total variance,
\begin{equation}\label{eqn:law_of_total_variance_intro}
\Var_{M,G}(\tr((MG)^T(MG))^2) = \E_M\left(\left.\Var_G(\tr((G^T M^T M G)^2)) \right\vert M \right) + \Var_M\left(\left. \E_G \tr((G^T M^T M G)^2)\right\vert M\right).
\end{equation}
For the first term on the right-hand side, we use Poincar\'e's inequality to upper bound it. Poincar\'e's inequality for the Gaussian measure states that for a differentiable function $f$ on $\R^m$,
\[
\Var_{g\sim N(0,I_m)}(f(g)) \leq C\E_{g\sim N(0,I_m)}\|\nabla f(g)\|_2^2.
\]
Here we can simply let $f(X) = \tr((MX)^T(MX))^2)$ and calculate $\E\|\nabla f(G)\|_2^2$. This is tractable since $\E \|\nabla f(G)\|_2^2$ involves the products of at most $4$ entries of $M$, and we can use the recursive idea for the expectation above to express
\[
\E \|\nabla f(G)\|_2^2 = \sum_{i} a_{ij} \E g_i(M)
\]
for a few functions $g_i$'s and establish a recurrence relation for each $g_i$.

The second term on the right-hand side of \eqref{eqn:law_of_total_variance_intro} can be dealt with by plugging in \eqref{eqn:decomposition}, and turns out to depend on a new quantity $\Var(\tr^2(M^T M))$. We again apply the recursive idea and the law of total variance to 
\[
\Var_{M,G}(\tr^2(G^T M^T M G)) = \E_M\left(\left.\Var_G(\tr^2((G^T M^T M G)) \right\vert M \right) + \Var_M\left(\left. \E_G \tr^2(G^T M^T M G)\right\vert M\right). 
\]
Again, the first term on the right-hand side can be handled by Poincar\'e's inequality and the second-term turns out to depend on $\Var(\tr((M^T M)^2))$, which is crucial. 
We have now obtained a double recurrence involving inequalities on $\Var(\tr((M^T M)^2))$ and $\Var(\tr^2((M^T M)^2))$, from which we can solve for an upper bound on $\Var(\tr(A_r^T A_r)^2)$. This upper bound, however, grows exponentially in $r$, which is impossible to improve due to our use of Poincar\'e's inequality.

\section{Preliminaries}

\subparagraph{Notation.} For a random variable $X$ and a probability distribution $\cD$, we use $X\sim \cD$ to denote that $X$ is subject to $\cD$. For two random variables $X$ and $Y$ defined on the same sample space, we write $X\eqdist Y$ if $X$ and $Y$ are identically distributed.

We use $\cG_{m,n}$ to denote the distribution of $m\times n$ Gaussian random matrices of i.i.d.\ entries $N(0,1)$ and $\cO_{m,n}$ to denote the uniform distribution (Haar) of an $m\times n$ random matrix with  orthonormal rows. For a distribution $\cD$ on a linear space and a scaling factor $\alpha\in \R$, we use $\alpha\cD$ to denote the distribution of $\alpha X$, where $X\sim \cD$.

For two probability measures $\mu$ and $\nu$ on the Borel algebra $\cF$ of $\R^m$, the total variation distance between $\mu$ and $\nu$ is defined as 
\[
d_{TV}(\mu,\nu) = \sup_{A\in \mathcal{F}} |\mu(A) - \nu(A)| = \frac{1}{2}\int_{\R^m} \left|\frac{d\mu}{d\nu}-1\right| d\nu.
\]
If $\nu$ is absolutely continuous with respect to $\mu$, one can define the Kullback-Leibler Divergence between $\mu$ and $\nu$ as
\[
D_{\KL}(\mu \| \nu) = \int_{\R^m} \frac{d\mu}{d\nu}\log_2 \frac{d\mu}{d\nu} d\nu.
\]
If $\nu$ is not absolutely continuous with respect to $\mu$, we define $D_{\KL}(\mu \| \nu) = \infty$.

When $\mu$ and $\nu$ correspond to two random variables $X$ and $Y$, respectively, we also write $d_{TV}(\mu,\nu)$ and $D_{\KL}(\mu\|\nu)$ as $d_{TV}(X,Y)$ and $D_{\KL}(X \| Y)$, respectively.

The following is the well-known relation between the Kullback-Leibler divergence and the total variation distance between two probability measures.
\begin{lemma}[Pinsker's Inequality~{\cite[Theorem 4.19]{concentration_book}}]\label{lem:pinsker}
$d_{TV}(\mu,\nu) \leq \sqrt{\frac{1}{2}D_{\KL}(\mu \| \nu)}$.
\end{lemma}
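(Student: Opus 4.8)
The plan is to prove Pinsker's inequality by the standard two‑step reduction. First I would reduce the general statement to the case of two Bernoulli distributions, using the fact that KL divergence does not increase when both measures are pushed forward under a measurable map (the data‑processing inequality); then I would establish the scalar inequality $p\ln\frac{p}{q}+(1-p)\ln\frac{1-p}{1-q}\ge 2(p-q)^2$ for $p,q\in[0,1]$ by elementary calculus. I would work throughout with the natural logarithm; since the paper's $D_{\KL}$ is defined with $\log_2=\ln/\ln 2$ and $\ln 2<1$, the inequality in the base‑$2$ convention is only weaker, so the stated constant $1/2$ is certainly valid (in fact one obtains the slightly stronger $d_{TV}\le\sqrt{\tfrac{\ln 2}{2}D_{\KL}}$).

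For the reduction, write $f=d\mu/d\nu$ and put $A^\ast=\{f>1\}$. A short computation with $(f-1)^{+}$ shows that the supremum defining $d_{TV}$ is attained at $A^\ast$, so $d_{TV}(\mu,\nu)=\mu(A^\ast)-\nu(A^\ast)=|p-q|$ with $p=\mu(A^\ast)$, $q=\nu(A^\ast)$; equivalently $d_{TV}(\mu,\nu)=d_{TV}(\mathrm{Ber}(p),\mathrm{Ber}(q))$. Applying the log‑sum inequality separately on $A^\ast$ and on its complement then gives
\[
D_{\KL}(\mu\|\nu)=\int_{A^\ast}\! f\log_2 f\,d\nu+\int_{(A^\ast)^c}\! f\log_2 f\,d\nu\ \ge\ p\log_2\tfrac{p}{q}+(1-p)\log_2\tfrac{1-p}{1-q}=D_{\KL}(\mathrm{Ber}(p)\|\mathrm{Ber}(q)),
\]
so it suffices to prove the Bernoulli case, i.e.\ the scalar inequality $\varphi(p,q):=p\ln\frac pq+(1-p)\ln\frac{1-p}{1-q}\ge 2(p-q)^2$ (the boundary cases $p,q\in\{0,1\}$ being immediate, as $\varphi$ is then $+\infty$ or a one‑line estimate).

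To prove the scalar inequality I would fix $p$ and set $g(q)=\varphi(p,q)-2(p-q)^2$, so that $g(p)=0$ and
\[
g'(q)=\frac{q-p}{q(1-q)}+4(p-q)=(q-p)\!\left(\frac{1}{q(1-q)}-4\right).
\]
Because $q(1-q)\le\tfrac14$, the factor $\frac{1}{q(1-q)}-4$ is nonnegative, so $g'(q)$ has the same sign as $q-p$; hence $g$ decreases on $(0,p)$ and increases on $(p,1)$, is therefore minimized at $q=p$, and so $g\ge g(p)=0$. Chaining the pieces, $d_{TV}(\mu,\nu)=|p-q|\le\sqrt{\tfrac12\varphi(p,q)}\le\sqrt{\tfrac12 D_{\KL}(\mu\|\nu)}$.

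The only genuinely delicate point is this scalar inequality — specifically the observation that $q(1-q)\le\frac14$ makes $\frac{1}{q(1-q)}-4\ge 0$, which is exactly what pins the minimum of $g$ at $q=p$; everything else is bookkeeping. A route that sidesteps the Bernoulli reduction is to combine the pointwise bound $t\ln t-t+1\ge\frac{3(t-1)^2}{2(t+2)}$ ($t\ge 0$) with Cauchy–Schwarz, $\big(\int|f-1|\,d\nu\big)^2\le\int\frac{(f-1)^2}{f+2}\,d\nu\cdot\int(f+2)\,d\nu$, together with $\int f\,d\nu=\int 1\,d\nu=1$, which yields $(2d_{TV})^2\le 3\cdot\tfrac23 D_{\KL}^{\mathrm{nat}}=2D_{\KL}^{\mathrm{nat}}$ in one shot. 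Either way the result follows; as it is quoted verbatim from \cite{concentration_book}, a citation also suffices.
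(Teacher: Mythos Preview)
Your proof is correct and follows the standard two-step argument (reduce to Bernoulli via the log-sum/data-processing inequality, then verify the scalar inequality by the one-variable calculus you give). The derivative computation $g'(q)=(q-p)\bigl(\tfrac{1}{q(1-q)}-4\bigr)$ and the use of $q(1-q)\le\tfrac14$ are exactly the right crux, and your remark about the $\log_2$ convention only strengthening the stated constant is also correct.

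That said, there is nothing to compare against: the paper does not prove Lemma~\ref{lem:pinsker} at all --- it is stated as a named result with a citation to \cite[Theorem~4.19]{concentration_book} and used as a black box. You yourself note this in your final sentence. So your write-up is strictly more than the paper supplies; for the purposes of this paper, the citation alone is what is expected.
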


The following result, concerning the distance between the submatrix of a properly scaled Gaussian random matrix and a submatrix of a random orthogonal matrix, is due to Jiang and Ma~\cite{jiang:late}. 

\begin{lemma}[{\cite{jiang:late}}]\label{lem:jiang_KL}
Let $G\sim \cG_{d,d}$ and $Z\sim \cO_{d,d}$. Suppose that $p,q\leq d$ and $\hat G$ is the top-left $p\times q$ block of $G$ and $\hat Z$ the top-left $p\times q$ block of $Z$. Then
\begin{equation}\label{eqn:jiang_KL}
d_{\KL}\left( \left. \frac{1}{\sqrt{d}}\hat G \right\| \hat Z \right) \leq C\frac{pq}{d},
\end{equation}
where $C>0$ is an absolute constant.
\end{lemma}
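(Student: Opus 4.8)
The plan is to prove the bound by directly comparing the densities of the two $p\times q$ ensembles, which is the route of Jiang~\cite{Jiang_2006} and Jiang and Ma~\cite{jiang:late}. Write $Y=\frac1{\sqrt d}\hat G$, a $p\times q$ matrix of i.i.d.\ $N(0,1/d)$ entries with density $f_Y(x)=(d/2\pi)^{pq/2}\exp(-\frac d2\tr(x^Tx))$ on $\R^{p\times q}$. The relevant regime is $p+q\le d$, where $\hat Z$ is absolutely continuous with respect to Lebesgue measure and, by the standard formula for a corner of a Haar orthogonal matrix, has density $f_{\hat Z}(x)=c_{d,p,q}\det(I_m-x^\ast x)^{(d-p-q-1)/2}$ on the operator-norm ball $\{\|x\|_{\mathrm{op}}<1\}$; here $m=\min\{p,q\}$, $x^\ast x$ is whichever of $x^Tx$ and $xx^T$ is $m\times m$, and $c_{d,p,q}$ is the normalizing constant of the matrix-variate Beta integral $\int\det(I_m-x^\ast x)^{(d-p-q-1)/2}\,dx$, which has a closed form as a ratio of products of Gamma functions. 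The first step is simply to record these two density formulas.

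Next I would form the log-likelihood ratio on $\{\|x\|_{\mathrm{op}}<1\}$ and expand the determinant through $-\log\det(I_m-x^\ast x)=\sum_{k\ge1}\frac1k\tr((x^\ast x)^k)$. Collecting terms,
\[
\log\frac{f_Y(x)}{f_{\hat Z}(x)}=\Bigl(\tfrac{pq}{2}\log\tfrac{d}{2\pi}-\log c_{d,p,q}\Bigr)-\tfrac{p+q+1}{2}\tr(x^Tx)+\tfrac{d-p-q-1}{2}\sum_{k\ge2}\tfrac1k\tr((x^\ast x)^k),
\]
so evaluating $d_{\mathrm{KL}}(\frac1{\sqrt d}\hat G\,\|\,\hat Z)=\E\log_2\!\frac{f_Y}{f_{\hat Z}}$ reduces to three computations: (i) a Stirling expansion of the constant $\tfrac{pq}{2}\log\tfrac{d}{2\pi}-\log c_{d,p,q}$; (ii) the low-order Schatten moments $\E\tr(x^Tx)=\Theta(pq/d)$ and $\E\tr((x^\ast x)^2)=\Theta(pq(p+q)/d^2)$, obtained from the second- and fourth-moment structure of the corner; and (iii) a tail estimate $\sum_{k\ge2}\tfrac1k\E\tr((x^\ast x)^k)=O(pq(p+q)/d^2)$, showing the $k=2$ term dominates the series. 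Here the key is that $\|\hat Z\|_{\mathrm{op}}<1$ together with the large exponent $(d-p-q-1)/2$, which forces $f_{\hat Z}$ to concentrate well inside the ball, so the series is effectively geometric and the contribution of a thin shell near $\|x\|_{\mathrm{op}}=1$, where the log-ratio is singular, is negligible when $pq=O(d)$.

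The main obstacle is that each of the blocks in (i)--(iii) is \emph{individually} of order $\Theta(pq(p+q)/d)$ --- the constant, the term $\tfrac{p+q+1}{2}\tr(x^Tx)$ after taking expectations, and the $k=2$ term weighted by $\tfrac{d-p-q-1}{2}$ all scale this way --- which is much larger than the target $O(pq/d)$. The bound only emerges after one shows that these leading-order pieces cancel. So the heart of the argument is to carry the Stirling expansion of $c_{d,p,q}$ (via $\log\Gamma(z)=z\log z-z+\tfrac12\log(2\pi/z)+\cdots$ applied to each factor) and the moment computations to second order in $1/d$, and to verify that the $\Theta(pq(p+q)/d)$ contributions --- and the subleading ones they interact with --- annihilate exactly, leaving a remainder of order $pq/d$; a crude term-by-term bound is far too lossy. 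Once the cancellation and the boundary/tail estimate of (iii) are in place, assembling the three ingredients gives $d_{\mathrm{KL}}(\frac1{\sqrt d}\hat G\,\|\,\hat Z)\le C\,pq/d$.
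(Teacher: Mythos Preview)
The paper does not give its own proof of this lemma; it simply cites Jiang--Ma and remarks that tracking the $o(1)$ terms in their argument yields the quantitative $O(pq/d)$ bound. Your proposal is precisely a sketch of that Jiang--Ma density-comparison computation (explicit densities, log-likelihood expansion, Stirling for the normalizer, low-order Schatten moments, tail control of the series), so the approaches coincide.

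There is, however, one genuine gap you should fix. As literally written, $d_{\KL}\bigl(\tfrac{1}{\sqrt d}\hat G\,\big\|\,\hat Z\bigr)=+\infty$: the Gaussian $\tfrac{1}{\sqrt d}\hat G$ puts strictly positive mass on $\{\|x\|_{\mathrm{op}}>1\}$, whereas $\hat Z$ is supported on $\{\|x\|_{\mathrm{op}}\le 1\}$, so the first measure is not absolutely continuous with respect to the second. Your remark that ``the contribution of a thin shell near $\|x\|_{\mathrm{op}}=1$ \dots\ is negligible'' does not address this --- the obstruction is not a boundary singularity but positive Gaussian mass strictly \emph{outside} the support of $\hat Z$, which sends the integral to $+\infty$ regardless of how small $pq/d$ is. What Jiang--Ma actually control is the reverse direction $d_{\KL}\bigl(\hat Z\,\big\|\,\tfrac{1}{\sqrt d}\hat G\bigr)$: there the expectation is over $\hat Z$, which lives entirely inside the ball, and then your three steps (Stirling for $c_{d,p,q}$, the moments $\E\tr(\hat Z^T\hat Z)$ and $\E\tr((\hat Z^T\hat Z)^2)$, and the geometric tail $\sum_{k\ge 3}$) go through to give $O(pq/d)$. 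This is also an imprecision in the paper's own statement of the lemma, not just in your sketch; since the only downstream use is Pinsker's inequality to bound $d_{TV}$, the orientation is immaterial for the application, but your write-up should take the expectation over $\hat Z$ and target $d_{\KL}\bigl(\hat Z\,\big\|\,\tfrac{1}{\sqrt d}\hat G\bigr)$ explicitly.
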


The original paper \cite{jiang:late} does not state explicitly the bound in \eqref{eqn:jiang_KL} and only states that the Kullback-Leibler divergence tends to $0$ as $d\to\infty$. A careful examination of the proof of \cite[Theorem 1(i)]{jiang:late}, by keeping track of the order of the various $o(1)$ terms, reveals the quantitative bound \eqref{eqn:jiang_KL}. 



\subparagraph{Useful Inequalities.} We list two useful inequalities below.
\begin{lemma}[Poincar\'e's inequality for Gaussian measure {\cite[Theorem 3.20]{concentration_book}}]
Let $X\sim N(0,I_n)$ be the standard $n$-dimensional Gaussian distribution and $f:\R^n\to\R$ be any continuously differentiable function. Then 
\[
\Var(f(X)) \leq \E\left(\norm{\nabla f(X)}_2^2\right).
\]
\end{lemma}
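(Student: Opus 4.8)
This inequality is classical (hence the citation), so rather than anything new I would give a short self-contained derivation via the Ornstein--Uhlenbeck semigroup. Let $(P_t)_{t\ge 0}$ be the semigroup given by Mehler's formula $P_tf(x)=\E_{Z\sim N(0,I_n)}\bigl[f\bigl(e^{-t}x+\sqrt{1-e^{-2t}}\,Z\bigr)\bigr]$, and I would first record its standard properties: $P_0f=f$; $\E[(P_tf)^2]\to(\E f)^2$ as $t\to\infty$; $N(0,I_n)$ is invariant under $P_t$; $P_t$ is self-adjoint on $L^2$ with generator $L=\Delta-x\cdot\nabla$, for which integration by parts against the Gaussian density gives $\E[\,gLg\,]=-\E(\norm{\nabla g}_2^2)$; and, differentiating Mehler's formula under the expectation coordinatewise, the commutation relation $\nabla(P_tf)=e^{-t}P_t(\nabla f)$, where $P_t$ acts on $\nabla f$ componentwise.

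With these in hand the computation is short. Using $P_0f=f$ and $P_\infty f=\E(f(X))$,
\[
\Var(f(X))=\E\bigl[(P_0f)^2\bigr]-\E\bigl[(P_\infty f)^2\bigr]=-\int_0^\infty\frac{d}{dt}\E\bigl[(P_tf)^2\bigr]\,dt=2\int_0^\infty\E\bigl(\norm{\nabla(P_tf)}_2^2\bigr)\,dt,
\]
where the last step uses $\frac{d}{dt}\E[(P_tf)^2]=2\E[(P_tf)\,L(P_tf)]=-2\E(\norm{\nabla(P_tf)}_2^2)$. I would then bound the integrand by combining the commutation relation with Jensen's inequality, $\norm{\nabla(P_tf)(x)}_2^2=e^{-2t}\norm{P_t(\nabla f)(x)}_2^2\le e^{-2t}\,P_t\bigl(\norm{\nabla f}_2^2\bigr)(x)$, and then taking expectations with respect to $N(0,I_n)$ and using its invariance under $P_t$ to get $\E(\norm{\nabla(P_tf)}_2^2)\le e^{-2t}\E(\norm{\nabla f}_2^2)$. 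Plugging this in gives $\Var(f(X))\le 2\E(\norm{\nabla f}_2^2)\int_0^\infty e^{-2t}\,dt=\E(\norm{\nabla f}_2^2)$, as claimed.

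The only genuine obstacle is regularity and integrability bookkeeping: justifying differentiation under the $t$-integral, the identity $\frac{d}{dt}\E[(P_tf)^2]=2\E[(P_tf)\,L(P_tf)]$, and the vanishing of the boundary contribution at $t=\infty$. I would dispose of this in the usual way: if $\E(\norm{\nabla f}_2^2)=\infty$ or $\E[f(X)^2]=\infty$ there is nothing to prove, and otherwise first establish the inequality for $f$ that is smooth with all partial derivatives bounded, so that $P_tf$ is smooth with controlled growth and every interchange above is legitimate, and then pass to an arbitrary continuously differentiable $f$ by truncation and mollification, checking that the right-hand side does not increase in the limit.

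As a fully elementary alternative that avoids semigroups, I would tensorize: the subadditivity (Efron--Stein) bound $\Var(f)\le\sum_{i=1}^n\E\bigl[\Var_{x_i}(f)\bigr]$ for the product measure $N(0,I_n)$ reduces the statement to the one-dimensional case, and in dimension one one expands $f$ in the orthonormal Hermite basis, $f=\sum_{k\ge 0}c_kh_k$, and uses $h_k'=\sqrt{k}\,h_{k-1}$ to obtain $\E[(f')^2]=\sum_{k\ge1}kc_k^2\ge\sum_{k\ge1}c_k^2=\Var(f)$; there the obstacle is the completeness of the Hermite polynomials in $L^2$ of the one-dimensional standard Gaussian and term-by-term differentiation, again handled by first reducing to polynomials.
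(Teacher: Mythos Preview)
Your proposal is correct and both routes you outline---the Ornstein--Uhlenbeck semigroup argument via Mehler's formula and the Efron--Stein/Hermite tensorization---are standard, complete proofs of the Gaussian Poincar\'e inequality. However, there is nothing to compare: the paper does not prove this lemma at all. It is stated with a citation to \cite[Theorem 3.20]{concentration_book} and used as a black box in the variance calculations of Section~\ref{sec:variance}. So your write-up goes well beyond what the paper itself provides, which is simply the statement.
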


\begin{lemma}[Trace inequality, {\cite{trace_inequalities}}]\label{lem:trace_inequality}
Let $A$ and $B$ be symmetric, positive semidefinite matrices and $k$ be a positive integer. Then
\[
\tr((AB)^k) \leq \min\left\{ \norm{A}_{op}^k \tr(B^k),  \norm{B}_{op}^k \tr(A^k) \right\}.
\]
\end{lemma}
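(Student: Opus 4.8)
\textbf{Proof plan for Lemma~\ref{lem:trace_inequality}.}
The plan is to prove the bound $\tr((AB)^k)\le \norm{A}_{op}^k \tr(B^k)$; the other bound follows by symmetry (swapping the roles of $A$ and $B$, using $\tr((AB)^k)=\tr((BA)^k)$). First I would pass to a more convenient conjugate: since $A$ is positive semidefinite, it has a PSD square root $A^{1/2}$, and using cyclicity of the trace,
\[
\tr((AB)^k) = \tr\!\left((A^{1/2}BA^{1/2})^k\right).
\]
Set $M := A^{1/2}BA^{1/2}$. This matrix is symmetric and PSD (it is of the form $C^T B C$ with $C=A^{1/2}$ and $B\succeq 0$), so $\tr(M^k)=\sum_i \lambda_i(M)^k$ with all $\lambda_i(M)\ge 0$. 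The goal is therefore reduced to showing $\sum_i \lambda_i(M)^k \le \norm{A}_{op}^k\,\tr(B^k)$.

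The key step is a spectral comparison between $M=A^{1/2}BA^{1/2}$ and $B$. Since $A\preceq \norm{A}_{op} I$, conjugating both sides by $B^{1/2}$ (valid because $B\succeq 0$) gives $B^{1/2}AB^{1/2}\preceq \norm{A}_{op}\, B$; but $B^{1/2}AB^{1/2}$ has the same nonzero eigenvalues as $A^{1/2}BA^{1/2}=M$ (again a cyclic/similarity argument: $(B^{1/2}A^{1/2})(A^{1/2}B^{1/2})$ versus $(A^{1/2}B^{1/2})(B^{1/2}A^{1/2})$). Hence, using the Loewner monotonicity of eigenvalues (Weyl), for every index $i$,
\[
\lambda_i(M) = \lambda_i\!\left(B^{1/2}AB^{1/2}\right) \le \lambda_i\!\left(\norm{A}_{op} B\right) = \norm{A}_{op}\,\lambda_i(B).
\]
Raising to the $k$-th power (legitimate since both sides are nonnegative and $t\mapsto t^k$ is monotone on $[0,\infty)$) and summing over $i$ yields
\[
\tr(M^k) = \sum_i \lambda_i(M)^k \le \norm{A}_{op}^k \sum_i \lambda_i(B)^k = \norm{A}_{op}^k\,\tr(B^k),
\]
which is exactly the first of the two claimed bounds. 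Applying the same argument with $A$ and $B$ interchanged gives $\tr((AB)^k)=\tr((BA)^k)\le \norm{B}_{op}^k\tr(A^k)$, and taking the minimum completes the proof.

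The main obstacle is being careful about \emph{nonzero} versus \emph{all} eigenvalues when I pass between $A^{1/2}BA^{1/2}$ and $B^{1/2}AB^{1/2}$, and about the index alignment in the Weyl monotonicity step: $\lambda_i(X)\le\lambda_i(Y)$ when $X\preceq Y$ requires fixing a consistent ordering (say, decreasing) of the eigenvalues, and I should note that any zero eigenvalues only help the inequality (they contribute $0$ to $\tr(M^k)$ while $\norm{A}_{op}^k\lambda_i(B)^k\ge 0$). An alternative, perhaps cleaner, route that sidesteps eigenvalue bookkeeping is to avoid square roots entirely: write $\tr((AB)^k)=\tr(A^{1/2}(BA)^{k-1}BA^{1/2})$, bound $A^{1/2}\cdot(\,\cdot\,)\cdot A^{1/2}\preceq \norm{A}_{op}(BA)^{k-1}B$ in the Loewner order, take traces, and induct on $k$ — but this still ultimately relies on the same PSD-monotonicity facts, so I would present the spectral version above as the main argument. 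All the facts invoked (existence of PSD square roots, cyclicity of trace, $(XY)$ and $(YX)$ share nonzero spectrum, Loewner monotonicity of ordered eigenvalues) are standard and can be cited from any matrix-analysis reference.
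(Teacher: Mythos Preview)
The paper does not actually prove this lemma: it is stated as a cited result from \cite{trace_inequalities} with no proof given, so there is no ``paper's own proof'' to compare against. Your main spectral argument is correct and self-contained: passing to $M=A^{1/2}BA^{1/2}$, using that $B^{1/2}AB^{1/2}\preceq\norm{A}_{op}B$ together with the fact that $A^{1/2}BA^{1/2}$ and $B^{1/2}AB^{1/2}$ have the same spectrum (indeed, for square $X,Y$ the matrices $XY$ and $YX$ have identical characteristic polynomials, so the eigenvalue bookkeeping you worry about is a non-issue here), and then applying Weyl monotonicity and summing $k$-th powers. This is a clean and standard proof of the inequality.

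One small caution about your proposed ``alternative'' inductive route: the step ``$A^{1/2}XA^{1/2}\preceq\norm{A}_{op}X$'' for PSD $X$ is \emph{not} valid in the Loewner order in general (take $A=\bigl(\begin{smallmatrix}1&1\\1&1\end{smallmatrix}\bigr)$ and $X=\bigl(\begin{smallmatrix}1&0\\0&0\end{smallmatrix}\bigr)$), so that sketch as written does not go through without further work. Since you already single out the spectral argument as the one to present, this does not affect the correctness of your main proof; just drop the alternative or rework it (e.g., via the trace inequality $\tr(PQ)\le\norm{P}_{op}\tr(Q)$ for PSD $P,Q$ applied iteratively, rather than a Loewner comparison).
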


\section{Upper Bound}

Let $r\geq 2$ be an integer. Suppose that $G_1,\dots,G_r$ are independent Gaussian random matrices, where $G_i \sim \cG_{d_{i-1},d_i}$ and $d_0 = p$, $d_r = q$ and $d_{r-1}=d_1$. Consider the product of normalized Gaussian matrices
\begin{gather*}
A_r = \left(\frac{1}{\sqrt{d_1}}G_1\right)\left(\frac{1}{\sqrt{d_2}}G_2\right)\cdots \left(\frac{1}{\sqrt{d_{r-1}}}G_{r-1}\right) \left(\frac{1}{\sqrt{d_1}}G_r\right) \\
\shortintertext{and a single normalized Gaussian random matrix}
A_1 = \frac{1}{\sqrt{d_1}} G_1'
\end{gather*}
where $G_1' \sim \cG_{p,q}$. In this section, we shall show that when $p,q\ll d_i$ for all $i$, we cannot distinguish $A_r$ from $A_1$ with constant probability.

For notational convenience, let $W_i = \frac{1}{\sqrt{d_i}}G_i$ for $i\leq r$ and $W_r = \frac{1}{\sqrt{d_1}}G_r$. Assume that $pq \leq \beta d_i$ for some constant $\beta$ for all $i$. Our question is to find the total variation distance between the matrix product $W_1 W_2\cdots W_r$ and the product $W_1 W_r$ of two matrices. 

\begin{lemma}
Let $p,q,d,d'$ be positive integers satisfying that $pq \leq \beta d$ and $pq\leq \beta d'$ for some constant $\beta < 1$.
Suppose that $A\in \R^{p\times d}$, $G\sim \frac{1}{\sqrt{d}}\cG_{d,d'}$, and $L\sim \cO_{d',d}$. Further suppose that $G$ and $L$ are independent. Let $Z\sim \cO_{q,d}$ be independent of $A$, $G$ and $L$. Then
\[
d_{TV}(AGL, AZ^T) \leq C\sqrt{\frac{pq}{d}},
\]
where $C > 0$ is an absolute constant.
\end{lemma}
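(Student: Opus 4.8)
The plan is to reduce $d_{TV}(AGL,AZ^T)$ to the Jiang--Ma estimate of Lemma~\ref{lem:jiang_KL} through a short chain of distributional identities, then conclude with Pinsker's inequality (Lemma~\ref{lem:pinsker}). There is essentially no hard analysis here; the content is entirely in getting the algebraic reductions right.

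First I would observe that $GL$ is itself exactly a scaled Gaussian matrix. Conditionally on $L$, the $q$ columns $G\ell_1,\dots,G\ell_q$ of $GL$ (where $\ell_1,\dots,\ell_q$ are the orthonormal columns of $L$) are jointly Gaussian with $\cov(G\ell_i,G\ell_j)=\frac1d\langle\ell_i,\ell_j\rangle I_d=\frac1d\delta_{ij}I_d$, because $G$ has i.i.d.\ $N(0,1/d)$ entries; hence they are i.i.d.\ $N(0,\frac1d I_d)$, i.e.\ $GL\eqdist\frac{1}{\sqrt d}\tilde G$ with $\tilde G\sim\cG_{d,q}$ conditionally on $L$ and therefore unconditionally. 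In particular $AGL\eqdist\frac{1}{\sqrt d}A\tilde G$, and this step does not involve $d'$ at all --- the hypothesis $pq\le\beta d'$ only guarantees $q<d'$, i.e.\ that $L$ is well defined.

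Next I would strip $A$ down to an orthonormal row frame. Writing $A=MV^T$ with $M\in\R^{p\times p}$ and $V^T\in\R^{p\times d}$ having orthonormal rows (e.g.\ from the singular value decomposition of $A$), the data-processing inequality for total variation distance --- $d_{TV}$ cannot increase under a fixed measurable map, here $X\mapsto MX$ --- gives
\[
d_{TV}(AGL,AZ^T)=d_{TV}\!\left(M\cdot\tfrac{1}{\sqrt d}V^T\tilde G,\;M\cdot V^TZ^T\right)\le d_{TV}\!\left(\tfrac{1}{\sqrt d}V^T\tilde G,\;V^TZ^T\right).
\]
The essential point is to stop here: $V^T$ is precisely what brings the ambient dimension down from $d$ to $p$, and that is the whole source of the $\sqrt{pq/d}$ gain; collapsing further to a comparison between the $d\times q$ matrices $\tfrac{1}{\sqrt d}\tilde G$ and $Z^T$ would be worthless.

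Finally I would recognise both of the reduced matrices as the ensembles in Lemma~\ref{lem:jiang_KL}. Since $V^T$ has orthonormal rows, $V^T\tilde G$ has i.i.d.\ columns with law $N(0,V^TV)=N(0,I_p)$, so $\tfrac{1}{\sqrt d}V^T\tilde G\sim\tfrac{1}{\sqrt d}\cG_{p,q}$, which is exactly the law of $\tfrac{1}{\sqrt d}\hat G$. For the other term, extend the rows of $V^T$ to a deterministic $d\times d$ orthogonal matrix $\Phi$, so that $V^T=[\,I_p\;\;0\,]\Phi$; since $Z^T$ consists of the first $q$ columns of a Haar-random $d\times d$ orthogonal matrix, left-invariance of Haar measure gives $\Phi Z^T\eqdist Z^T$, hence $V^TZ^T\eqdist[\,I_p\;\;0\,]Z^T$, which is exactly the top-left $p\times q$ block $\hat Z$. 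Therefore the last quantity above equals $d_{TV}(\tfrac{1}{\sqrt d}\hat G,\hat Z)$, and Pinsker's inequality together with Lemma~\ref{lem:jiang_KL} bound this by $\sqrt{\tfrac12\cdot C\,pq/d}=C'\sqrt{pq/d}$, where $p,q\le d$ (which follows from $pq\le\beta d$ with $\beta<1$) ensures the blocks are well defined. The only real difficulty is the bookkeeping --- tracking which side of each factor is orthonormal, and not over-applying data processing so that the dimension reduction carried by $V^T$ survives; once $A$ has been replaced by $V^T$, the quantitative bound is immediate.
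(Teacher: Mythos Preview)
Your proof is correct and follows essentially the same route as the paper: decompose $A=U\Sigma V^T$, use rotational invariance to identify $V^TGL$ with a $p\times q$ scaled Gaussian and $V^TZ^T$ with the top $p\times q$ block of a Haar orthogonal matrix, then apply Jiang--Ma plus Pinsker. The only cosmetic differences are that you collapse $GL$ to a $d\times q$ Gaussian first and invoke data processing for $d_{TV}$ explicitly, whereas the paper works with $d_{\KL}$ and writes the data-processing step as an equality.
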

\begin{proof}
Let $A = U\Sigma V^T$ be its singular value decomposition, where $V$ has dimension $d\times p$. Then 
\[
A G L = U\Sigma (V^T G L) \eqdist U\Sigma X,
\] 
where $X$ is a $p\times q$ random matrix of i.i.d.\ $N(0,1/d)$ entries. Suppose that $\tilde{Z}$ consists of the top $p$ rows of $Z^T$. Then 
\[
A Z^T = U\Sigma (V^T Z^T) \eqdist U\Sigma \tilde{Z}.
\]
Note that $X$ and $Z$ are independent of $U$ and $\Sigma$. It follows from Lemma~\ref{lem:jiang_KL} that
\[
d_{\KL}(AGL \| AZ^T) = d_{\KL}(U\Sigma X \| U\Sigma \tilde{Z}) = d_{\KL}(X \| \tilde{Z}) \leq C\frac{pq}{d},
\]
where $C > 0$ is an absolute constant. The result follows from Pinsker's inequality (Lemma~\ref{lem:pinsker}).
\end{proof}

The next theorem follows from the above.
\begin{theorem} It holds that
\[
d_{TV}(W_1\cdots W_{r} , W_1 W_{r})  \leq C\sum_{i=1}^r \sqrt{\frac{pq}{d_i}},
\]
where $C>0$ is an absolute constant.
\end{theorem}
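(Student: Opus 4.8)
The plan is to apply the preceding Lemma iteratively, peeling off one factor at a time from the left while using the rotational invariance of the Gaussian distribution to re-express the remaining product as a fresh Gaussian matrix. Concretely, write $A_r = W_1 W_2 \cdots W_r$. The first step is to isolate the suffix: $W_1 \cdots W_r = (W_1 \cdots W_{r-1}) W_r$. Using the SVD of $W_1 \cdots W_{r-1}$ and rotational invariance of $W_r$, the product $(W_1 \cdots W_{r-1}) W_r$ has the same distribution as $(W_1 \cdots W_{r-1}) L_{r-1}$ where $L_{r-1}$ is an appropriately scaled matrix and, more to the point, we can also view things in the opposite order. I would instead telescope as follows: introduce intermediate quantities
\[
B_k = W_1 \cdots W_k Z_k^T, \qquad k = 1, \ldots, r-1,
\]
where $Z_k \sim \cO_{q, d_k}$ is independent of everything preceding it, and $B_r = W_1 \cdots W_r$; note $B_1 = W_1 Z_1^T \eqdist W_1 W_r$ (up to the normalization $1/\sqrt{d_1}$, which matches since $d_{r-1} = d_1$) by rotational invariance, and that is our target distribution.

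The key step is to bound $d_{TV}(B_{k+1}, B_k)$ for each $k$ using the Lemma. For this I would set, in the notation of the Lemma, $A = W_1 \cdots W_{k-1} \cdot \frac{1}{\sqrt{d_k}} G_k$ played by the prefix up to the appropriate point, $G$ the next normalized Gaussian factor, and $L = Z_k^T$ or $Z_{k+1}^T$ suitably. More carefully: conditioned on the prefix $P = W_1 \cdots W_{k}$, the Lemma (with $d = d_k$, $d' = d_{k+1}$, the role of $A$ played by $P$, the role of $G$ by $W_{k+1}$, the role of $L$ by a Haar matrix, and $Z$ by $Z_k$) gives that $P \cdot W_{k+1} \cdot (\text{Haar})$ is within $C\sqrt{pq/d_k}$ in total variation of $P Z_k^T$. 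Because $W_{k+1} \cdot (\text{Haar orthonormal})$ is again distributionally a normalized Gaussian $W_{k+1}$ by rotational invariance, this says $P W_{k+1}$ — after absorbing a further $Z_{k+1}^T$ to make dimensions work — is close to $P Z_k^T$. Summing the per-step bounds via the triangle inequality for $d_{TV}$ yields $d_{TV}(B_r, B_1) \le C \sum_{k=1}^{r-1} \sqrt{pq/d_k}$, which is exactly the claimed bound (the $i = r$ term is vacuous or absorbed since $d_r = q \le$ the others are what matter; in any case extending the sum to $r$ only weakens it).

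The main obstacle — really a bookkeeping obstacle rather than a conceptual one — is getting the dimensions and the independence structure to line up at each step so that the Lemma applies verbatim. The Lemma requires $G$ and $L$ independent and $Z$ independent of $A, G, L$; when we condition on the prefix $P$ and want to invoke it, we must ensure the Haar factor we insert and the $Z_k$ we compare against are genuinely fresh. The trick that makes this work cleanly is the observation used in the Lemma's own proof: $W_{k+1}$ (a normalized Gaussian) times an independent Haar orthonormal matrix is distributed exactly as $W_{k+1}$, so we are free to insert such a Haar factor "for free" and then apply the Lemma. One also has to check that the normalization constants match — the problem statement uses $1/\sqrt{d_1}$ (not $1/\sqrt{d_r}$) on the last factor precisely so that $W_1 W_r$ and $W_1 Z_1^T$ agree in distribution, using $d_{r-1} = d_1$. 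Once these alignments are fixed, each step contributes $C\sqrt{pq/d_k}$ and the triangle inequality finishes the proof; I would expect the write-up to be short, essentially a clean induction on $r$ with the Lemma as the inductive engine.
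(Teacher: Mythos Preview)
Your telescoping strategy---peel off one Gaussian factor at a time via the preceding Lemma and sum the errors---is exactly the paper's approach. The gap is in your identification of the endpoint. You assert that $B_1 = W_1 Z_1^T \eqdist W_1 W_r$ ``by rotational invariance,'' but this is false. Since $W_1 = \frac{1}{\sqrt{d_1}} G_1$ is Gaussian and $Z_1^T$ has orthonormal columns, rotational invariance gives $W_1 Z_1^T \eqdist \frac{1}{\sqrt{d_1}} G'$ for a $p\times q$ matrix $G'$ of i.i.d.\ $N(0,1)$ entries; that is, $B_1 \eqdist A_1$, a \emph{single} normalized Gaussian. By contrast $W_1 W_r = \frac{1}{d_1} G_1 G_r$ is a product of two independent Gaussians, and these two laws are not equal---measuring the distance between them is precisely the content of the theorem. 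So your chain, as written, proves $d_{TV}(A_r, A_1) \le C\sum_k \sqrt{pq/d_k}$ (the Corollary), not the Theorem.

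The paper sidesteps this by carrying a fixed tail through the iteration. It first takes the singular value decomposition $W_r = U\Sigma V^T$ and then telescopes along the sequence $W_1\cdots W_k\, X_{k+1}^T \Sigma V^T$ with $X_{k+1}\sim \cO_{q,d_k}$, so that every intermediate object still ends in $\Sigma V^T$. The final term is $W_1 X_2^T \Sigma V^T$; since $X_2^T$ has the same Haar distribution as $U$ and is independent of $(\Sigma, V)$, this is genuinely distributed as $W_1 U\Sigma V^T = W_1 W_r$. Your version discards $\Sigma V^T$ at the outset, which is why you land at $A_1$ instead. You could repair your argument by appending one more triangle-inequality step bounding $d_{TV}(A_1, W_1 W_r)$ (the $r=2$ instance of the same estimate), but as it stands the proof does not establish the stated theorem.
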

\begin{proof}
Let $W_{r} = U\Sigma V^T$ and $X_{i}\sim \cO_{q,d_i}$, independent from each other and from the $W_i$'s. Applying the preceding lemma with $A = W_1\cdots W_{r-2}$, $G = W_{r-1}$ and $L = U$, we have
\[
d_{TV}(W_1\cdots W_{r-2}W_{r-1} W_{r} , W_1\cdots W_{r-2} X_{r-1}^T \Sigma V^T) \leq C\sqrt{\frac{pq}{d_{r-1}}},
\]
Next, applying the preceding lemma with $A = W_1\cdots W_{r-3}$, $G = W_{r-1}$ and $L = X_r$, we have
\[
d_{TV}(W_1\cdots W_{r-2} X_r^T \Sigma V^T , W_1\cdots W_{r-3} X_{r-2}^T \Sigma V^T) \leq C\sqrt{\frac{pq}{d_{r-2}}},
\]
Iterating this procedure, we have in the end that 
\[
d_{TV}(W_1 W_2 X_3^T \Sigma V^T , W_1 X_2^T \Sigma V^T) \leq C\sqrt{\frac{pq}{d_2}}.
\]
Since $U$, $\Sigma$ and $V$ are independent and $X_2 \eqdist U^T$, it holds that $X_2^T\Sigma V^T \eqdist W_{r}$. Therefore,
\[
d_{TV}(W_1\cdots W_{r} , W_1 W_{r})  \leq C\sum_{i=2}^{r-1} \sqrt{\frac{pq}{d_i}}. \qedhere
\]
\end{proof}

Repeating the same argument for $W_1 W_r$, we obtain the following corollary immediately.
\begin{corollary}\label{cor:upperbound} It holds that
\[
d_{TV}(A_r, A_1)  \leq C\sum_{i=1}^{r-1} \sqrt{\frac{pq}{d_{i}}},
\]
where $C>0$ is an absolute constant.
\end{corollary}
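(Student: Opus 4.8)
The plan is to glue together two facts already at hand: the preceding theorem, which bounds $d_{TV}(W_1\cdots W_r, W_1 W_r)$ by (essentially) $C\sum_{i=2}^{r-1}\sqrt{pq/d_i}$, and one more application of the preceding lemma, which I will use to bound $d_{TV}(W_1 W_r, A_1)$ by $C\sqrt{pq/d_1}$; the triangle inequality for total variation distance then gives
\[
d_{TV}(A_r, A_1)\le d_{TV}(W_1\cdots W_r, W_1 W_r)+d_{TV}(W_1 W_r, A_1)\le C\sum_{i=1}^{r-1}\sqrt{\frac{pq}{d_i}},
\]
the extra term $\sqrt{pq/d_1}$ being precisely the $i=1$ term of the target sum (note that the ``middle'' dimension of the two-factor product $W_1 W_r$ is $d_{r-1}=d_1$). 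In other words: having reduced the $r$-fold product to the two-fold product $W_1 W_r$ in the theorem by peeling off the Gaussian factors $W_{r-1},\dots,W_2$ one at a time, we peel off one more Gaussian factor, namely $W_r$. (We may assume the target bound is $<1$, since $d_{TV}\le 1$ always; then $pq<d_1$ and all the hypotheses of the preceding lemma, including $pq\le\beta d$ with $\beta<1$, are met below.)

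To realize this last peel, I would apply the preceding lemma with ambient dimension $d=d'=d_1$, taking $A=W_1\in\R^{p\times d_1}$ (as in the theorem, the lemma is used with a random $A$ independent of $G,L,Z$), letting $G\sim\frac{1}{\sqrt{d_1}}\cG_{d_1,d_1}$ and $L$ a $d_1\times q$ matrix with orthonormal columns, with $W_1,G,L$ mutually independent, and $Z\sim\cO_{q,d_1}$ independent of everything. The two identifications needed are: first, $GL\eqdist W_r$ and $GL$ is independent of $W_1$ — indeed, conditionally on $L$ the columns of $\sqrt{d_1}\,GL$ are the images of the orthonormal columns of $L$ under a $\cG_{d_1,d_1}$ matrix, hence i.i.d.\ $N(0,I_{d_1})$ by rotational invariance of the Gaussian, so the law of $GL$ is $\frac{1}{\sqrt{d_1}}\cG_{d_1,q}$ irrespective of $L$, which is exactly the law of $W_r$; second, $W_1Z^T=\frac{1}{\sqrt{d_1}}G_1Z^T\eqdist\frac{1}{\sqrt{d_1}}\cG_{p,q}$ (again rotational invariance, as $Z^T$ has orthonormal columns), which is exactly the law of $A_1$. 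Hence $W_1GL\eqdist W_1W_r$ and $W_1Z^T\eqdist A_1$, so the lemma yields $d_{TV}(W_1W_r,A_1)=d_{TV}(W_1GL,W_1Z^T)\le C\sqrt{pq/d_1}$, as needed.

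I do not expect any real obstacle here: all of the genuine analytic content sits in the lemma (through the Jiang--Ma estimate, Lemma~\ref{lem:jiang_KL}, and Pinsker's inequality, Lemma~\ref{lem:pinsker}) and in the theorem, and the present step is merely the observation that the two-factor product $W_1W_r$ is itself an instance of the lemma once the Gaussian factor $W_r$ is presented as (a Gaussian) times (an orthonormal matrix). If one prefers not to route through the lemma's statement, the same conclusion follows by repeating the lemma's proof verbatim for $W_1W_r$: write $W_1=U\Sigma V^T$, so $W_1W_r\eqdist U\Sigma X$ with $X\sim\frac{1}{\sqrt{d_1}}\cG_{p,q}$ independent of $U,\Sigma$; replace $X$ by a $p\times q$ block $\hat Z$ of a Haar orthogonal matrix of dimension $d_1$, at a cost of $C\sqrt{pq/d_1}$, using Lemma~\ref{lem:jiang_KL} and Lemma~\ref{lem:pinsker}; and check $U\Sigma\hat Z\eqdist A_1$ by rotational invariance (writing $\hat Z=Y^TW$ with $Y^T\sim\cO_{p,d_1}$ and $W$ a $d_1\times q$ matrix of orthonormal columns, one has $U\Sigma Y^T\eqdist W_1$ and then $U\Sigma Y^TW\eqdist\frac{1}{\sqrt{d_1}}\cG_{p,q}$).
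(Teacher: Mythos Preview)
Your proposal is correct and takes essentially the same approach as the paper: the paper's entire proof is the sentence ``Repeating the same argument for $W_1 W_r$, we obtain the following corollary immediately,'' and you have simply spelled out what that repetition means---one more peel via the preceding lemma to pass from $W_1W_r$ to $A_1$, contributing the $i=1$ term, combined with the theorem via the triangle inequality. Your alternative at the end (rerunning the lemma's proof directly on $W_1W_r$) is the same thing again.
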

\section{Lower Bound}

Suppose that $r$ is a constant. We shall show that one can distinguish the product of $r$ Gaussian random matrices 
\[
A_r = \left(\frac{1}{\sqrt{d_1}}G_1\right)\left(\frac{1}{\sqrt{d_2}}G_2\right)\cdots \left(\frac{1}{\sqrt{d_{r-1}}}G_{r-1}\right) \left(\frac{1}{\sqrt{d_1}}G_r\right),
\]
from one Gaussian random matrix
\[
A_1 = \frac{1}{\sqrt{d_1}}G_1'
\]
when the intermediate dimensions $d_1,\dots,d_{r-1}$ are not large enough. Considering $h(X) = \tr((X^T X)^2)$, it suffices to show that one can distinguish $h(A_r)$ and $h(A_1)$ with a constant probability for constant $r$. By Chebyshev's inequality, it suffices to show that
\[
\max\left\{\sqrt{\Var(h(A_1))}, \sqrt{\Var(h(A_r))}\right\} \leq c(\E h(A_r) - \E h(A_1))
\]
for a small constant $c$. We calculate that:
\begin{lemma}
Suppose that $r$ is a constant, $d_i\geq \max\{p,q\}$ for all $i=1,\dots,r$. When $p,q,d_1,\dots,d_r\to\infty$, 
\[
\E h(A_r) = \frac{pq(p+q+1)}{d_r^2} + (1+o(1))\frac{pq(p-1)(q-1)}{d_r^2}\sum_{j=1}^{r-1} \frac{1}{d_j}.
\]
\end{lemma}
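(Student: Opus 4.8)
\textbf{Setup and strategy.} The plan is to compute $\E h(A_r) = \E\tr((A_r^T A_r)^2)$ by the recursive decomposition idea outlined in the introduction. Write $A_r = M_{r} W_r$ where $M_r = W_1\cdots W_{r-1}$ and recall $W_i = \frac{1}{\sqrt{d_i}}G_i$, with $G_i\sim \cG_{d_{i-1},d_i}$ and $G_r$ scaled by $1/\sqrt{d_1}$. The key is to identify a closed family of trace functionals that reproduces itself under right-multiplication by a Gaussian matrix. The natural candidates for a fixed $p\times d$ matrix $M$ and $G\sim\cG_{d,d'}$ are $f_1(M) = \tr((M^TM)^2)$, $f_2(M) = (\tr(M^TM))^2$, and $f_3(M) = \tr((M^TM)^2)$'s ``companion'' $\|M\|_F^2\cdot(\text{something})$ — more precisely, expanding $\E_G\tr(((MG)^T(MG))^2)$ using Wick's formula (pairing the Gaussian entries of $G$) produces exactly terms of the form $\tr((M^TM)^2)$, $(\tr(M^TM))^2$, and a scalar multiple of $\tr(M^TM)$ times lower-order pieces; since $\tr(M^TM)$ itself maps to a multiple of $\tr(M^TM)$ under the same operation, the family $\{f_1,f_2\}$ together with $f_0(M)=\tr(M^TM)$ and the constant function is closed.

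\textbf{Key steps.} First I would carry out the Wick expansion: for $G$ with i.i.d.\ $N(0,1)$ entries, $\E_G \tr(((MG)^T(MG))^2)$ is a sum over pairings of four copies of $G$, and each pairing contributes a product of traces in $N := M^TM$. Collecting terms gives a linear relation
\[
\E_G f_1(MG) = \alpha_1(d,d') f_1(M) + \alpha_2(d,d') f_2(M) + \alpha_3(d,d')\, d'\, f_0(M),
\]
and similarly $\E_G f_2(MG) = \beta_1 f_1(M) + \beta_2 f_2(M) + \cdots$, $\E_G f_0(MG) = d' f_0(M)$, where the coefficients $\alpha_i,\beta_i$ are explicit low-degree polynomials in $d,d'$ (this is the content of \eqref{eqn:decomposition}). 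Then, accounting for the $1/\sqrt{d_i}$ normalizations, I would iterate these recurrences starting from the base case $M = W_1$, where $\E f_1(W_1)$, $\E f_2(W_1)$, $\E f_0(W_1)$ are the known moments of a Wishart matrix $\frac{1}{d_1}G_1^TG_1$ (e.g.\ $\E\tr((W_1^TW_1)^2) = \frac{pq(p+q+1)}{d_1^2}$ and $\E(\tr W_1^TW_1)^2$, $\E\tr W_1^TW_1$ by standard Gaussian computations). Solving the resulting linear system across the $r-1$ multiplications, I keep only the dominant contribution: the leading term $\frac{pq(p+q+1)}{d_r^2}$ comes from the ``identity'' part of the recurrence (no extra pairing), while the correction $\frac{pq(p-1)(q-1)}{d_r^2}\sum_{j} 1/d_j$ arises from the first-order cross terms, each multiplication by $W_j$ contributing one factor of $1/d_j$; all higher-order cross terms are $O(1/(d_id_j))$ and, under $d_i\ge\max\{p,q\}$, are absorbed into the $(1+o(1))$ as $p,q,d_i\to\infty$.

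\textbf{Main obstacle.} The bookkeeping in the Wick expansion is the delicate part: there are several pairing patterns, some of which produce terms like $\tr(M^TMM^TM)$ versus $\tr(MM^TMM^T)$ (equal here) and genuinely distinct ones like $(\tr M^TM)^2$, and one must get the combinatorial multiplicities and the dependence on both $d$ (the contracted dimension) and $d'$ (the new dimension) exactly right — a loose constant anywhere would corrupt the $(p-1)(q-1)$ factor, which is what ultimately distinguishes $A_r$ from $A_1$ in the lower bound. After the expansion, solving the linear recurrence is routine but requires care to verify that the sub-leading terms really are lower order given only $d_i\ge\max\{p,q\}$ (not $d_i\gg pq$), so that the claimed asymptotic holds in the stated regime. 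I would also double-check the base case and the effect of the anomalous $1/\sqrt{d_1}$ (rather than $1/\sqrt{d_r}$) scaling on $G_r$, which is why the final answer is written with $d_r^2$ in the denominator of the leading term together with $d_1=d_{r-1}$.
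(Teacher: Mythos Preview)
Your approach is correct and is in fact a somewhat different---and arguably cleaner---route than the one the paper takes. The paper does not work with the trace functionals $f_1(M)=\tr((M^TM)^2)$ and $f_2(M)=(\tr M^TM)^2$ directly. Instead it parameterizes by six \emph{entrywise} fourth-moment quantities $S_1,\dots,S_6$ (e.g.\ $S_1=\E A_{11}^4$, $S_6=\E A_{ik}A_{il}A_{jk}A_{jl}$ for a rectangle pattern), writes $\E\tr((A^TA)^2)$ as a linear combination of these, and then for $A=BG$ derives a linear recurrence expressing each $S_i$ in terms of the corresponding quantities $T_i$ for $B$. Exploiting rotational invariance it shows the relations $S_1=3S_4$, $S_3=S_4$, $S_3=2S_6+S_5$ propagate, solves the recurrence in closed form for $S_3$ and $S_6$, and reads off the asymptotic.

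Your trace-functional recursion collapses all of this to a $2\times 2$ system: once you carry out the Wick pairing you will find
\[
\E_G f_1(MG)=d'(d'+1)\,f_1(M)+d'\,f_2(M),\qquad
\E_G f_2(MG)=2d'\,f_1(M)+d'^2\,f_2(M),
\]
so $\{f_1,f_2\}$ is already closed---you will not need $f_0=\tr(M^TM)$, a separate $f_3$, or any constant term. After normalizing by the $1/\sqrt{d_i}$ factors this becomes $I+\frac{1}{d_i}\bigl(\begin{smallmatrix}1&1\\2&0\end{smallmatrix}\bigr)$ per step, from which the leading term and the first-order $\sum_j 1/d_j$ correction drop out exactly as you describe. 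What your route buys is brevity; what the paper's entrywise route buys is an explicit closed-form for the quantity $S_6$ (the ``rectangle'' correlation), which makes the combinatorial source of the $(p-1)(q-1)$ factor visually transparent. Either way the bookkeeping you flag as the main obstacle is genuinely the only obstacle, and it is lighter than you anticipate.
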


\begin{lemma}
Suppose that $r$ is a constant, $d_i\geq \max\{p,q\}$ for all $i=1,\dots,r$. There exists an absolute constant $C$ such that, when $p,q,d_1,\dots,d_r$ are sufficently large,
\[
\Var(h(A_r)) \leq \frac{C^r(p^3 q + p q^3)}{d_1^4}.
\]
\end{lemma}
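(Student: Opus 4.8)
The plan is to run a recursion over the partial products $M_k := W_1 W_2 \cdots W_k$ for $1 \le k \le r$ (so $M_1 = \tfrac{1}{\sqrt{d_1}}G_1$ and $M_r = A_r$), carrying along the two quantities $a_k := \Var(h(M_k))$ and $b_k := \Var(g(M_k))$, where $h(X) = \tr((X^TX)^2)$ and $g(X) = (\tr(X^TX))^2$; the target is $a_r$. First I would compute, by the same recursive Wick-pairing idea used for $\E h(A_r)$ in the preceding lemma, the expectations of all the symmetric functionals of $N_k := M_k^TM_k$ that arise, and in particular the exact conditional expectation $\E_{G_k}[h(M_k)\mid M_{k-1}] = c_1^{(k)} g(M_{k-1}) + c_2^{(k)} h(M_{k-1})$ and its analogue for $g$: a one-line Gaussian moment computation gives $c_1^{(k)} = 1/d_k$, $c_2^{(k)} = 1 + 1/d_k$ for $k \le r-1$, whereas for $k = r$ (where $G_r$ is $d_1 \times q$ and the normalizing factor is $1/\sqrt{d_1}$) one gets $c_1^{(r)} = q/d_1^2$ and $c_2^{(r)} = q(q+1)/d_1^2$, the smallness of the latter when $q \ll d_1$ being the mechanism that produces the final $q$-dependence.

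Next I would apply the law of total variance at each step, $a_k = \E_{M_{k-1}} \Var_{G_k}(h(M_k)\mid M_{k-1}) + \Var_{M_{k-1}} \E_{G_k}(h(M_k)\mid M_{k-1})$, and likewise for $b_k$. The second term is, by the previous paragraph, the variance of an explicit linear combination of $g(M_{k-1})$ and $h(M_{k-1})$, hence at most $2(c_2^{(k)})^2 a_{k-1} + 2(c_1^{(k)})^2 b_{k-1}$. For the first term I would use Poincar\'e's inequality for the Gaussian measure applied to $G \mapsto h(\tfrac{1}{\sqrt{d_k}}M_{k-1}G)$ as a function of the i.i.d.\ entries of $G$: this gives $\Var_{G_k}(h(M_k)\mid M_{k-1}) \le \E_{G_k}\norm{\nabla_{G_k}h(M_k)}_2^2 = \tfrac{16}{d_k^4}\E_{G_k}\tr\bigl((G_k^TN_{k-1}G_k)^2(G_k^TN_{k-1}^2G_k)\bigr)$, and a second Wick computation evaluates the right-hand side as a combination, with coefficients polynomial in $d_k$, of $\E\tr(N_{k-1}^4)$, $\E(\tr(N_{k-1}^2))^2$ and $\E[\tr(N_{k-1}^3)\tr(N_{k-1})]$ — all functionals of fourth order in the singular values of $M_{k-1}$, i.e.\ eighth order in $M_{k-1}$. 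The point of keeping things fourth order in $M$ is now to knock these back down: by the trace inequality (Lemma~\ref{lem:trace_inequality}), $\tr(N_{k-1}^4) \le \norm{N_{k-1}}_{op}^2 \tr(N_{k-1}^2)$ and $\tr(N_{k-1}^3) \le \norm{N_{k-1}}_{op}^2 \tr(N_{k-1})$, so that, on the high-probability event that $\norm{M_{k-1}}_{op} = \norm{N_{k-1}}_{op}^{1/2}$ is at most a constant depending only on $k$ (a standard fact for normalized products of Gaussian matrices — here $\E[M_{k-1}M_{k-1}^T] = I_p$ — provable by induction or citable), together with $\tr(N_{k-1}) \le \sqrt{p}\,\tr(N_{k-1}^2)^{1/2}$ and Cauchy--Schwarz, each of these expectations is $O_k(1)$ times $\E[h(M_{k-1})^2] = a_{k-1} + (\E h(M_{k-1}))^2$ or $\E[g(M_{k-1})^2] = b_{k-1} + (\E g(M_{k-1}))^2$; the contribution of the (super-polynomially unlikely) complementary event is absorbed by truncation.

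This yields a coupled linear recursion $(a_k, b_k) \le A^{(k)}(a_{k-1}, b_{k-1}) + v^{(k)}$ with $A^{(k)}$ having $O(1)$ entries (and, in the $a$-coordinate for $k \le r-1$, coefficient $1 + O(1/d_k)$ on $a_{k-1}$) and $v^{(k)}$ of order $\tfrac{C_k}{d_k}\bigl((\E h(M_{k-1}))^2, (\E g(M_{k-1}))^2\bigr)$. Unrolling it from the base case $M_1 = \tfrac{1}{\sqrt{d_1}}G_1$ (for which $a_1, b_1$ are classical Wishart fluctuation quantities, $O(p^3/d_1^3 + p/d_1)$ and $O(p^3/d_1)$) through $M_r = A_r$, and using that for the intermediate products $\E h(M_k) = O_r(p)$, $a_k = O_r(p/d_k)$, $\E g(M_k) = \Theta_r(p^2)$, $\Var(\tr N_k) = O_r(1)$ (hence $b_k = O_r(p^2)$), while the last step carries the small factors $c_1^{(r)} = q/d_1^2$, $c_2^{(r)} = q(q+1)/d_1^2$ and the prefactors $\tfrac{16}{d_1^4}(q^3, q^2, q^2)$ in front of them — so that the dominant Poincar\'e piece is $\tfrac{16}{d_1^4}q(q+1)(q+2)\,\E\tr(N_{r-1}^4) = O_r(pq^3/d_1^4)$, where crucially $\E\tr(N_{r-1}^4) = O_r(p)$ thanks to the operator-norm bound rather than the naive $O_r(p^2)$ — one arrives at $\Var(h(A_r)) = O_r((pq^3 + p^2q^2)/d_1^4)$. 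Since $p^2q^2 \le \tfrac12(p^3q + pq^3)$ (and running the same argument on $A_r^T$ gives the symmetric bound), this is at most $C^r(p^3q + pq^3)/d_1^4$; the factor $C^r$ comes from the accumulated operator-norm constants, the factor-$2$ losses in bounding $\Var(X+Y)$, and the $r$-fold product $\prod_k(1 + O(1/d_k))$, and — as remarked in the introduction — it appears intrinsic to the Poincar\'e-based scheme.

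The heart of the work, and the main obstacle, is not the high-level structure but the sharpness of the estimates. Each Wick evaluation (of $\E_G\tr((G^TNG)^2)$, of $\E_G\tr((G^TNG)^2(G^TN^2G))$, and of their $g$-analogues) must be carried out exactly enough to read off the coefficient of every monomial in $p$, $q$ and the $d_i$, and one must then verify that all the numerous sub-leading terms are dominated by $(p^3q + pq^3)/d_1^4$, which repeatedly uses $\max\{p,q\} \le d_i$. More delicate is the fact that the cheap bound $\tr(N^4) \le (\tr(N^2))^2$ would give only $\E\tr(N_{r-1}^4) = O_r(p^2)$, which already overshoots the target when $p = \Theta(q)$; one must instead route through $\norm{M_{k-1}}_{op}$, and hence prove a (standard but non-trivial, due to the possibly very different aspect ratios of the $G_i$) bound on the operator norm of a product of rectangular Gaussian matrices, and correspondingly confirm the fourth-order estimates $\E h(M_k) = O_r(p)$, $a_k = O_r(p/d_k)$ and $\Var(\tr N_k) = O_r(1)$ for the intermediate products. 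These two ingredients — the exact moment bookkeeping and the sharp control of $\E\tr(N_{r-1}^4)$ — are where essentially all the effort goes.
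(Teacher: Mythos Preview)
Your proposal follows the paper's proof almost exactly: the coupled recursion on $U_k=\Var(h(M_k))$ and $V_k=\Var(g(M_k))$ via the law of total variance, the use of Poincar\'e's inequality for the conditional variance term, the exact conditional expectation $\E_G\tr((G^TMG)^2)=q(q+1)\|M\|_F^2+q\tr^2(M)$, and the resulting linear recursion solved from the base case --- all of this is the paper's Section~4.2 (Steps 1a, 1b, 2a, 2b, 3). Your compact formula $\|\nabla f\|_F^2=16\tr((G^TNG)^2G^TN^2G)$ is just a matrix-calculus repackaging of the paper's entrywise computation and unwinds to the same four-term bound.

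The one place you diverge is in propagating the Poincar\'e terms through the recursion. You propose to bound $\tr(N_{k-1}^4)\le\|N_{k-1}\|_{op}^2\tr(N_{k-1}^2)$ via the operator norm of the \emph{cumulative} product $M_{k-1}$, which then forces you into a high-probability bound on $\|W_1\cdots W_{k-1}\|_{op}$ plus a truncation argument --- you correctly flag this as the main technical work. The paper sidesteps this entirely: instead of reducing $\tr(N_{k-1}^4)$ downward, it tracks each of the four quantities $\tr(M^4)$, $\|M\|_F^4$, $\tr(M)\|M\|_F\sqrt{\tr(M^4)}$, $\tr^2(M)\|M\|_F^2$ separately through the step $M\mapsto G^TMG$, and applies the trace inequality (Lemma~\ref{lem:trace_inequality}) with $B=GG^T$ (just the \emph{single new} Gaussian) to get, e.g., $\E\tr((G^TMG)^4)=\E\tr((MGG^T)^4)\le\E\|G\|_{op}^8\tr(M^4)$. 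This needs only the standard bound $\E\|G_k\|_{op}^8\lesssim\max\{d_{k-1},d_k\}^4$ for a single Gaussian matrix, giving the geometric growth $P_{r+1}\le C_3 P_r$ directly --- no cumulative operator-norm lemma, no truncation. Your route is correct but unnecessarily heavy; the paper's application of Lemma~\ref{lem:trace_inequality} is exactly the simplification you are looking for.
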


We conclude with the following theorem, which can be seen as a tight converse to Corollary~\ref{cor:upperbound} up to a constant factor on the conditions for $p,q,d_1,\dots,d_r$.

\begin{theorem}
Suppose that $r$ is a constant and $d_i\geq \max\{p,q\}$ for all $i=1,\dots,r$. Further suppose that $d_1=d_r$. When $p,q,d_1,\dots,d_r$ are sufficiently large and satisfy that
\[
\sum_{j=1}^{r-1} \frac{1}{d_j}\geq\frac{C^r}{\max\{p,q\}^{\frac12}\min\{p,q\}^{\frac32}},
\]
where $C > 0$ is some absolute constant, with probability at least $2/3$, one can distinguish $A_r$ from $A_1$.
\end{theorem}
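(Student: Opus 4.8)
The plan is to realize the distinguisher through the test statistic $h(X) = \tr((X^T X)^2)$, thresholding it strictly between the two means and bounding the error on each side by Chebyshev's inequality, exactly as sketched before the two lemmas. First I would pin down the $r=1$ quantities: since $A_1 = d_1^{-1/2}G_1'$ with $G_1'\sim\cG_{p,q}$, a direct Wishart-moment computation (equivalently, the $r=1$ specialization of the two lemmas, for which $\sum_{j=1}^{r-1}1/d_j$ is an empty sum) gives $\E h(A_1) = pq(p+q+1)/d_1^2$ and $\Var(h(A_1)) \le C_v(p^3q+pq^3)/d_1^4$, where $C_v$ is an absolute constant we may take to dominate the constant in the variance lemma. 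Using $d_1=d_r$ and the mean lemma, the separation is
\[
\Delta := \E h(A_r) - \E h(A_1) = (1+o(1))\,\frac{pq(p-1)(q-1)}{d_1^2}\sum_{j=1}^{r-1}\frac{1}{d_j},
\]
so for $p,q$ large we may assume $\Delta \ge \tfrac12\,\frac{pq(p-1)(q-1)}{d_1^2}\sum_{j=1}^{r-1}\frac1{d_j}$, while the variance lemma gives $\max\{\Var(h(A_1)),\Var(h(A_r))\} \le C_v^{\,r}(p^3q+pq^3)/d_1^4$.

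Next I would run the Chebyshev step. Declaring ``$A_r$'' if $h>t$ and ``$A_1$'' otherwise, with $t=\tfrac12(\E h(A_1)+\E h(A_r))$, the two error probabilities are at most $4\Var(h(A_1))/\Delta^2$ and $4\Var(h(A_r))/\Delta^2$; hence it suffices to establish $\max\{\Var(h(A_1)),\Var(h(A_r))\}\le c^2\Delta^2$ for a small absolute constant $c$ (e.g.\ $c=1/4$, which gives success probability at least $3/4>2/3$). Plugging in the bounds above and cancelling $d_1^{-2}$, this reduces to
\[
\sum_{j=1}^{r-1}\frac1{d_j}\ \ge\ \frac{2\,C_v^{\,r/2}\sqrt{p^3q+pq^3}}{c\,pq(p-1)(q-1)}.
\]

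Finally I would simplify the right-hand side to the stated form. Writing $\sqrt{p^3q+pq^3}=\sqrt{pq}\,\sqrt{p^2+q^2}\le\sqrt2\,\sqrt{pq}\,\max\{p,q\}$ and using $pq(p-1)(q-1)\ge\tfrac14(pq)^2$ for $p,q\ge 2$, the right-hand side is at most $\frac{8\sqrt2\,C_v^{\,r/2}}{c}\cdot\frac{\max\{p,q\}}{(pq)^{3/2}}=\frac{8\sqrt2\,C_v^{\,r/2}}{c}\cdot\frac{1}{\max\{p,q\}^{1/2}\min\{p,q\}^{3/2}}$, since $(pq)^{3/2}=\max\{p,q\}^{3/2}\min\{p,q\}^{3/2}$. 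Choosing the absolute constant $C$ in the theorem at least $\max\{1,\tfrac{8\sqrt2}{c}\sqrt{C_v}\}$ makes $C^{\,r}\ge\frac{8\sqrt2}{c}C_v^{\,r/2}$ for every $r\ge 2$ (the case $r=1$ being vacuous, as the hypothesis sum is empty), so the hypothesis $\sum_{j=1}^{r-1}1/d_j\ge C^{\,r}/(\max\{p,q\}^{1/2}\min\{p,q\}^{3/2})$ implies the displayed inequality, and Chebyshev concludes.

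The argument is a short deduction once the two lemmas are in hand; the only care needed is the bookkeeping of the $r$-dependent constant (taking the square root of the $C^{\,r}$-type variance bound and verifying that one choice of $C$ works uniformly in $r$) and the exact, though elementary, conversion of $\sqrt{p^2+q^2}$ and $pq(p-1)(q-1)$ into the advertised $\max/\min$ exponents — which is precisely what makes this converse match Corollary~\ref{cor:upperbound} up to a constant factor when $p=\Theta(q)$. The genuine obstacle lies entirely upstream, in establishing the mean and especially the variance estimates of the two preceding lemmas; conditional on those, there is no further difficulty here.
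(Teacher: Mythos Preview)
Your proposal is correct and follows exactly the approach the paper outlines: the paper reduces the theorem to the Chebyshev criterion $\max\{\sqrt{\Var(h(A_1))},\sqrt{\Var(h(A_r))}\}\le c(\E h(A_r)-\E h(A_1))$ via the test statistic $h(X)=\tr((X^TX)^2)$, states the two lemmas, and then asserts the theorem without further argument. You have faithfully supplied the arithmetic the paper leaves implicit---the midpoint threshold, the $\sqrt{p^3q+pq^3}$ simplification, and the constant bookkeeping for $C^r$---and your handling of the $r=1$ baseline for $\E h(A_1)$ and $\Var(h(A_1))$ matches the paper's exact computations in Section~\ref{sec:mean} and Appendix~\ref{sec:variance r = 2}.
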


\subsection{Calculation of the Mean}\label{sec:mean}

Suppose that $A$ is a $p\times q$ random matrix, and is rotationally invariant under left- and right-multiplication by orthogonal matrices. We define
\begin{align*}
S_1(p,q) &= \E A_{11}^4 \quad \text{(diagonal)} \\
S_2(p,q) &= \E A_{21}^4 \quad \text{(off-diagonal)} \\
S_3(p,q) &= \E A_{i1}^2 A_{j1}^2 \quad (i\neq j)\quad \text{(same column)}\\
S_4(p,q) &= \E A_{1i}^2 A_{1j}^2 \quad (i\neq j)\quad \text{(same row)}\\
S_5(p,q) &= \E A_{1i}^2 A_{2j}^2 \quad (i\neq j) \\
S_6(p,q) &= \E A_{ik} A_{il} A_{jk} A_{jl}\quad (i\neq j, k\neq l) \quad \text{(rectangle)}
\end{align*}
Since $A$ is left- and right-invariant under rotations, these quantities are well-defined. 
Then
\begin{gather*}
\begin{aligned}
\E \tr((A^TA)^2) = \E\sum_{1\leq i,j\leq q} (A^TA)_{ij}^2 &= \sum_{i=1}^q \E(A^TA)_{ii}^2 + \sum_{1\leq i, j\leq q,  i\neq j} \E(A^TA)_{ij}^2\\
&= q\E(A^T A)_{11}^2 + q(q-1)\E(A^T A)_{12}^2
\end{aligned}\\
\shortintertext{and}
\begin{aligned}
\E(A^T A)_{11}^2 = \E \bigg(\sum_{i=1}^p A_{i1}^2 \bigg)^2 &= \sum_{i=1}^p \E A_{i1}^4 + \sum_{1\leq i,j\leq p, i\neq j} \E A_{i1}^2 A_{j1}^2 \\
&= \E A_{11}^4 + (p-1)\E A_{21}^4 + p(p-1) \E A_{11}^2 A_{21}^2 \\
&=: S_1(p,q) + (p-1)S_2(p,q) + p(p-1)S_3(p,q)
\end{aligned}\\
\begin{aligned}
\E(A^T A)_{12}^2 = \E \bigg(\sum_{i=1}^p A_{i1}A_{i2} \bigg)^2 
&= \sum_{i=1}^p \E A_{i1}^2 A_{i2}^2 + \sum_{1\leq i,j\leq p, i\neq j} \E A_{i1} A_{i2} A_{j1} A_{j2} \\
&= p S_4(p,q) + p(p-1) S_6(p,q).
\end{aligned}
\end{gather*}
When $S_1(p,q)=S_2(p,q)$, we have
\begin{align*}
\E \tr((A^TA)^2) &= q(p S_1(p,q) + p(p-1)S_3(p,q)) + q(q-1)(p S_4(p,q) + p(p-1) S_6(p,q))\\
&= p q S_1(p,q) \!+\! pq(p\!-\!1)S_3(p,q) \!+\! pq(q\!-\!1)S_4(p,q) \!+\! p(p\!-\!1)q(q\!-\!1)S_6(p,q).
\end{align*}
When $A = G$, we have 
\begin{gather*}
S_1(p,q)=S_2(p,q) = 3, \quad S_3(p,q)=S_4(p,q)=S_5(p,q) = 1,\quad S_6(p,q) = 0 \\
\shortintertext{and so}
\E\tr((A^TA)^2) = 3pq + pq(p-1) + pq(q-1) = pq(p+q+1).
\end{gather*}
Next, consider $A = BG$, where $B$ is a $p\times d$ random matrix and $G$ a $d\times q$ random matrix of i.i.d.~$N(0,1)$ entries. The following proposition is easy to verify, and its proof is postponed to Appendix~\ref{sec:proof_of_simple_prop}.
\begin{proposition}\label{prop:simple_prop}
It holds that $\E A_{21}^4 =  \E A_{11}^4$.
\end{proposition}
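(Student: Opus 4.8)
The plan is to condition on $B$, which reduces the fourth moment of an entry of $A = BG$ to a fourth moment of a row norm of $B$. Let $g\in\R^d$ denote the first column of $G$, so $g\sim N(0,I_d)$ and the first column of $A$ equals $Bg$; hence $A_{i1} = \langle B_{i,\cdot},g\rangle$, where $B_{i,\cdot}$ is the $i$-th row of $B$. Conditionally on $B$ this is a centered Gaussian with variance $\norm{B_{i,\cdot}}_2^2$, so the Gaussian fourth-moment identity gives $\E[A_{i1}^4\mid B] = 3\norm{B_{i,\cdot}}_2^4$. Averaging over $B$ yields $\E A_{11}^4 = 3\E\norm{B_{1,\cdot}}_2^4$ and $\E A_{21}^4 = 3\E\norm{B_{2,\cdot}}_2^4$, so it remains to check that $\E\norm{B_{1,\cdot}}_2^4 = \E\norm{B_{2,\cdot}}_2^4$.

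I would then observe that the rows of $B$ are exchangeable in all the cases where the proposition is applied: $B$ is a partial product of normalized Gaussian matrices, say $B = W_1W_2\cdots W_k$, and for any permutation matrix $\Pi$ we have $\Pi B = (\Pi W_1)W_2\cdots W_k \eqdist W_1W_2\cdots W_k = B$ since $\Pi W_1 \eqdist W_1$ by rotational invariance of the Gaussian. In particular $B_{1,\cdot}\eqdist B_{2,\cdot}$, which gives the required equality of norm-moments. One can even skip the moment computation entirely: $\Pi A = (\Pi B)G \eqdist BG = A$, so $A_{21} = (\Pi A)_{11}\eqdist A_{11}$, which already implies the proposition.

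I do not expect any real obstacle here; the only point worth making explicit is the hypothesis that makes the statement true. For an arbitrary random matrix $B$ the identity $\E A_{21}^4 = \E A_{11}^4$ can fail — for instance if the second row of $B$ is a deterministic multiple of the first — so the argument does use that rows $1$ and $2$ of $B$ share the same marginal law. This is precisely the condition under which $A = BG$ is rotationally invariant, i.e.\ the setting in which the quantities $S_1,\dots,S_6$ are defined, and it holds automatically for the partial products $W_1\cdots W_k$ appearing in the recursion.
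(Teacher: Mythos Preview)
Your proposal is correct and follows essentially the same approach as the paper: both reduce $\E A_{i1}^4$ via the Gaussian fourth-moment identity to $3\E\norm{B_{i,\cdot}}_2^4$ (the paper writes this out as $3\sum_i \E B_{1i}^4 + 3\sum_{i\neq j}\E B_{1i}^2 B_{1j}^2$) and then use that rows $1$ and $2$ of $B$ are identically distributed. Your observation that $\Pi A \eqdist A$ already gives the result without any computation is a clean shortcut the paper does not spell out, and your explicit discussion of the hypothesis on $B$ is more careful than the paper's bare final equality.
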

\begin{proof}
We have
\begin{gather*}
\begin{aligned}
\E A_{11}^4 = \E\bigg(\sum_{i} B_{1i} G_{i1}\bigg)^4 &= \sum_{i,j,k,l} \E B_{1i}B_{1j}B_{1k}B_{1l} \E G_{i1}G_{j1}G_{k1}G_{l1} \\
&= 3\sum_i \E B_{1i}^4 + 3\sum_{i\neq j} \E B_{1i}^2 B_{1j}^2 \\
\end{aligned}
\shortintertext{and}
\begin{aligned}
\E A_{21}^4 = \E\bigg(\sum_{i} B_{2i} G_{i1}\bigg)^4 &= \sum_{i,j,k,l} \E B_{2i}B_{2j}B_{2k}B_{2l} \E G_{i1}G_{j1}G_{k1}G_{l1} \\
&= 3\sum_i \E B_{2i}^4 + 3\sum_{i\neq j} \E B_{2i}^2 B_{2j}^2 = \E A_{11}^4. \qedhere
\end{aligned}
\end{gather*}
\end{proof}

Suppose that the associated functions of $B$ are named $T_1,T_2,T_3,T_4,T_6,T_5$. Then we can calculate that (detailed calculations can be found in Appendix~\ref{sec:omitted_mean})
\begin{align*}
S_1(p,q) &= 3d T_1(p,d) + 3d(d-1)T_4(p,d)\\
S_3(p,q) &= 3dT_3(p,d) + d(d-1)T_5(p,d) + 2d(d-1)T_6(p,d)\\
S_4(p,q) &= d T_1(p,d) + d(d-1)T_4(p,d)\\
S_5(p,q) &= d T_3(p,d) + d(d-1)T_5(p,d)\\
S_6(p,q) &= d T_3(p,d) + d(d-1)T_6(p,d)
\end{align*}
It is clear that $S_1,S_3,S_4,S_5,S_6$ depend only on $d$ (not on $p$ and $q$) if $T_1,T_3,T_4,T_5,T_6$ do so. Furthermore, if $T_1=3T_4$ then we have $S_1=3S_4$ and thus $S_4 = d(d+2)T_4$. If $T_3=2T_6+T_5$ then $S_3=d(d+2)T_3$ and $S_3 = 2S_6+S_5$. Hence, if $T_3=T_4$ then $S_3=S_4$. We can verify that all these conditions are satisfied with one Gaussian matrix and we can iterate it to obtain these quantities for the product of $r$ Gaussian matrices with intermediate dimensions $d_1,d_2,\dots,d_{r-1}$. We have that
\[
S_3 = S_4 = \prod_{i=1}^{r-1} d_i(d_i+2),\quad
S_1 = 3S_4,\quad
S_6 = \sum_{j=1}^{r-1} \left(\prod_{i=1}^{j-1} d_i(d_i+2)\right)d_j\left(\prod_{i=j+1}^{r-1} d_i(d_i-1)\right).
\]
Therefore, normalizing the $i$-th matrix by $1/\sqrt{d_i}$, that is,
\[
A = \left(\frac{1}{\sqrt{d_1}}G_1\right)\left(\frac{1}{\sqrt{d_2}}G_2\right)\cdots \left(\frac{1}{\sqrt{d_{r-1}}}G_{r-1}\right) \left(\frac{1}{\sqrt{d_1}}G_r\right),
\]
we have for constant $r$ that
\begin{equation}\label{eqn:mean_estimate}
\begin{aligned}
\E\tr((A^TA)^2) &= \frac{1}{d_1^2 d_2^2 \cdots d_{r-1}^2 d_1^2}\left(pq(p+q+1)S_3 + pq(p-1)(q-1)S_6\right) \\
&\approx \frac{pq(p+q+1)}{d_r^2} + \frac{pq(p-1)(q-1)}{d_r^2}\sum_{j=1}^{r-1}\frac{1}{d_j}.
\end{aligned}
\end{equation}

\subsection{Calculation of the Variance}\label{sec:variance}

Let $M\in \R^{p\times p}$ be a random symmetric matrix, and let $G\in \R^{p\times q}$ be a random matrix of i.i.d.\ $N(0,1)$ entries. We want to find the variance of $\tr((G^T MG)^2)$. The detailed calculations of some steps can be found in Appendix~\ref{sec:omitted_variance}.

Our starting point is the law of total variance, which states that 
\begin{equation}\label{eqn:law of total variance}
\Var(\tr((G^T MG)^2)) = \E_M \left(\left.\Var_G (\tr((G^T MG)^2)) \right\vert M \right) +  \Var_M\left(\left. \E_G \tr((G^T MG)^2)\right\vert M\right) 
\end{equation}

\subparagraph*{Step 1a.} We shall handle each term separately. Consider the first term, which we shall bound using the Poincar\'e inequality for Gaussian measures. Define $f(X) = \tr((X^T MX)^2)$, where $X\in \R^{p\times q}$. We shall calculate $\nabla f$. 
\[
f(X) = \norm{X^T M X}_F^2 = \sum_{1\leq i,j\leq q} (X^T MX)_{ij}^2 = \sum_{1\leq i,j\leq q}\bigg(\sum_{1\leq k,l\leq p} M_{k l} X_{k i} X_{l j}\bigg)^2.
\]
Then
\[
\frac{\partial f}{\partial X_{rs}} = \sum_{1\leq i,j\leq q} 2\bigg(\sum_{1 \leq u,v\leq p} M_{u v} X_{u i} X_{v j}\bigg)\bigg(\sum_{1\leq k,l\leq p}\frac{\partial}{\partial X_{rs}}  (M_{k l} X_{k i} X_{l j})\bigg).
\]
Note that
\[
\frac{\partial}{\partial X_{rs}}  (M_{k l} X_{k i} X_{l j}) = \begin{cases}
														M_{k l} X_{l j}, & (k,i)=(r,s)\text{ and }(l,j)\neq(r,s)\\
														M_{k l} X_{k i}, & (k,i)\neq (r,s)\text{ and }(l,j)=(r,s)\\
														2M_{r r} X_{r s}, & (k,i)=(r,s)\text{ and }(l,j)=(r,s)\\
														0, & \text{otherwise}.
													   \end{cases}
\]
we have that
\begin{align*}
\frac{\partial f}{\partial X_{rs}} &= 4\left(\sum_{1 \leq u,v\leq p} M_{u v} X_{u s} X_{v s}\right) M_{r r} X_{r s} + 2\sum_{(l,j)\neq (r,s)} \left(\sum_{1 \leq u,v\leq p} M_{u v} X_{u s} X_{v j}\right) M_{r l} X_{l j}\\
&\qquad + 2\sum_{(k,i)\neq (r,s)} \left(\sum_{1 \leq u,v\leq p} M_{u v} X_{u i} X_{v s}\right) M_{k r} X_{k i}\\
&=4\left[ \left(\sum_{1 \leq u,v\leq p} M_{u v} X_{u s} X_{v s}\right) M_{r r} X_{r s} + \sum_{(l,j)\neq (r,s)} \left(\sum_{u,v} M_{u v} X_{u s} X_{v j}\right) M_{r l} X_{l j} \right] \\
&= 4\sum_{l,j} \left(\sum_{u,v} M_{u v} X_{u s} X_{v j}\right) M_{r l} X_{l j}.
\end{align*}
Next we calculate $\E (\partial f/\partial X_{rs})^2$ when $X$ is i.i.d.\ Gaussian. 
\[
\left(\frac{1}{4}\frac{\partial f}{\partial X_{rs}}\right)^2 = \sum_{\substack{l,j\\ l',j'}} \sum_{\substack{u,v\\ u',v'}} M_{u v} M_{u' v'} M_{r l}  M_{r l'} \E X_{u s} X_{u' s} X_{v j} X_{l j} X_{v' j'}   X_{l' j'}
\]
We discuss different cases  of $j,j',s$.

When $j\neq j'\neq s$, it must hold that $u=u'$, $v=l$ and $v'=l'$ for a possible nonzero contribution, and the total contribution in this case is at most $q(q-1)B^{(1)}_{r,s}$, where
\[
B^{(1)}_{r,s} = \sum_{1\leq l,l'\leq p} \sum_u M_{ul} M_{ul'} M_{rl} M_{rl'} = \sum_u \langle M_{u,\cdot},M_{r,\cdot}\rangle^2.
\]

When $j = j'\neq s$, it must hold that $u = u'$ for a possible nonzero contribution, and the total contribution in this case is at most $(q-1)B^{(2)}_{r,s}$, where 
\begin{align*}
B^{(2)}_{r,s} &= \sum_{l,l'} \sum_{u,v,v'} M_{uv}M_{uv'}M_{rl} M_{rl'}\E X_{us}^2 X_{vj} X_{lj} X_{v'j} X_{l'j} \\
&= \norm{M}_F^2 \norm{M_{r,\cdot}}_2^2 + 2\sum_u \langle M_{u,\cdot},M_{r,\cdot}\rangle^2.
\end{align*}

When $j = s\neq j'$, it must hold that $v' = l'$ for possible nonzero contribution, and the total contribution in this case is at most $(q-1)B^{(3)}_{r,s}$, where
\begin{align*}
B^{(3)}_{r,s} &= \sum_{j'\neq s}\left[\sum_{l, l'} \sum_{u,v} M_{u v} M_{u'l'} M_{rl} M_{rl'}\E X_{us} X_{u's} X_{vs} X_{ls} X_{l'j'}^2\right]\\
&= \sum_{l, l'} (2\langle M_{l,\cdot}, M_{l',\cdot}\rangle + \tr(M) M_{l l'}) M_{rl} M_{rl'}.
\end{align*}
When $j = j' = s$, the nonzero contribution is
\[
B^{(4)}_{r,s} = \sum_{l, l'} \sum_{\substack{u,v\\ u',v'}} M_{u v} M_{u' v'} M_{r l}  M_{r l'} \E X_{u s} X_{u' s} X_{v s} X_{l s} X_{v' s} X_{l' s}.
\]
Since $u, u', v, v', l, l'$ needs to be paired, the only case which is not covered by $B^{(1)}_{rs}, B^{(3)}_{rs}$ and $B^{(3)}_{rs}$ is when $u=v$, $u'=v'$ and $l=l'$, in which case the contribution is at most
\[
	\sum_l \sum_{u,u'} M_{uu} M_{u'u'} M_{rl}^2 \E X_{us}^2 X_{u's}^2 X_{ls}^2 \lesssim \tr^2(M) \norm{M_{r,\cdot}}_2^2.
\]
Hence
\[
B^{(4)}_{r,s} \lesssim B^{(1)}_{r,s} + B^{(2)}_{r,s} + B^{(3)}_{r,s} + \tr^2(M) \norm{M_{r,\cdot}}_2^2.
\]
It follows that
\begin{gather*}
\sum_{r,s} B^{(1)}_{r,s} = q \sum_{u,r} \langle M_{u,\cdot},M_{r,\cdot}\rangle^2 = q\tr(M^4) \\
%
\sum_{r,s} B^{(2)}_{r,s} = q\sum_r \norm{M}_F^2 \norm{M_{r,\cdot}}_2^2 + 2q \sum_{u,r} \langle M_{u,\cdot},M_{r,\cdot}\rangle^2 = q \norm{M}_F^4 + 2q\tr(M^4) \\
%
\begin{aligned}
\sum_{r,s} B^{(3)}_{r,s} &= \sum_{r,s} \sum_{l, l'}(2\langle M_{l,\cdot}, M_{l',\cdot}\rangle + \tr(M) M_{l'l'}) M_{rl} M_{rl'}\\
&\leq 2q\tr(M^4) + q \tr(M) \norm{M}_F \sqrt{\tr(M^4)}
\end{aligned}
\end{gather*}
Note that $\tr(M^4) \leq \tr^2(M^2) = \norm{M}_F^4$. Hence
\begin{align*}
\frac{1}{16}\E \norm{\nabla f}_2^2 &\leq \sum_{r,s} ((q-1)(q-2)B_{rs}^{(1)} + (q-1)B_{rs}^{(2)} + (q-1)B_{rs}^{(3)} + B_{rs}^{(4)}) \\
&\lesssim \sum_{r,s} (q^2 B_{rs}^{(1)} + q B_{rs}^{(2)} + q B_{rs}^{(3)} + \tr^2(M) \norm{M_{r,\cdot}}_2^2)\\
&\lesssim q^3 \tr(M^4) + q^2 \norm{M}_F^4 + q^2 \tr(M) \norm{M}_F \sqrt{\tr(M^4)} + q\tr^2(M)\norm{M}_F^2.
\end{align*}
By the Gaussian Poincar\'e inequality,
\begin{equation}\label{eqn:Var_G term in U}
\begin{aligned}
&\quad\ \Var_G (\tr((G^T MG)^2) | M) \\
&\lesssim \E\norm{\nabla f}_2^2 \\
&\lesssim q^3 \tr(M^4) + q^2 \norm{M}_F^4 + q^2 \tr(M) \norm{M}_F \sqrt{\tr(M^4)} + q\tr^2(M)\norm{M}_F^2.
\end{aligned}
\end{equation}
For the terms on the right-hand side, we calculate that (using the trace inequality (Lemma~\ref{lem:trace_inequality}))
\begin{gather*}
\begin{aligned}
\E\tr((G^T MG)^4) = \E\tr((MGG^T)^4) \leq \E\norm{GG^T}_{op}^4 \tr(M^4) &= \E\norm{G}_{op}^8 \tr(M^4) \\
&\lesssim \max\{p,q\}^4\tr(M^4),
\end{aligned}
\\
\E\norm{G^T MG}_F^4 \leq \E\norm{G}_{op}^8 \norm{M}_F^4 \lesssim \max\{p,q\}^4  \norm{M}_F^4,
\\
\E \tr^2(G^T MG)\norm{G^T MG}_F^2\leq \E \norm{G}_{op}^8 \tr^2(M)\norm{M}_F^2 \lesssim \max\{p,q\}^4 \tr^2(M)\norm{M}_F^2
\end{gather*}
and
\begin{align*}
&\quad\ \E \tr(G^T MG) \norm{G^T MG}_F \sqrt{\tr((G^T MG)^4)}  \\
&\leq \E \norm{G}_{op}^2 \tr(G) \cdot \norm{G}_{op}^2 \norm{M}_F^2 \cdot \sqrt{\norm{G}_{op}^8 \tr(M^4)}\\
&= \E\norm{G}_{op}^8 \tr(M) \norm{M}_F \sqrt{\tr(M^4)} \\
&\lesssim \max\{p,q\}^4 \tr(M) \norm{M}_F \sqrt{\tr(M^4)}.
\end{align*}
This implies that each term on the right-hand of~\eqref{eqn:Var_G term in U} grows geometrically.

\subparagraph*{Step 1b.} Next we deal with the second term in~\eqref{eqn:law of total variance}. We have
\begin{align*}
\E_G \tr\left((G^T MG)^2 \right) = \sum_{i,j} \E_G (G^T MG)_{ij}^2 &= \sum_{i,j} \E_G \bigg(\sum_{k,l} M_{kl} G_{ki} G_{lj}\bigg)^2 \\
&= \sum_{i,j} \sum_{k,l,k',l'} M_{kl} M_{k'l'} \E_G G_{ki} G_{lj} G_{k'i} G_{l'j}. 
\end{align*}
When $i\neq j$, for non-zero contribution, it must hold that $k=l$ and $k'=l'$ and thus the nonzero contribution is
\[
 \sum_{i\neq j} \sum_{k,l} M_{kl}^2 = q(q-1)\norm{M}_F^2.
\]
When $i = j$, the contribution is
\begin{equation}\label{eqn:i=j}
\sum_i \sum_{\substack{k,l,k',l'}} M_{kl} M_{k'l'} \E G_{ki} G_{li} G_{k'i} G_{l'i} = 2q\norm{M}_F^2 + q\tr^2(M).
\end{equation}
Hence
\[
\E_G \tr\left((G^T MG)^2 \right) = q(q+1)\norm{M}_F^2 + q\tr^2(M)
\]
and when $M$ is random,
\begin{equation}\label{eqn:second_term_in U}
\begin{aligned}
&\quad\, \Var\left(\left. \E \tr((G^T MG)^2)\right\vert M\right)\\
&= \Var\left(q(q+1)\norm{M}_F^2 + q\tr^2(M)\right)\\
&\leq q^2(q+1)^2\Var(\norm{M}_F^2) + q^2\Var(\tr^2(M)) + 2q^2(q+1)\sqrt{\Var(\norm{M}_F^2) \Var(\tr^2(M))}.
\end{aligned}
\end{equation}

\subparagraph*{Step 2a.} Note that the $\Var(\tr^2(M))$ term on the right-hand side of~\eqref{eqn:second_term_in U}. To bound this term, we examine the variance of $g(G)$, where $g(X) = \tr^2(X^T MX)$. We shall again calculate $\nabla g$. Note that
\[
\frac{\partial g}{\partial X_{rs}} = 2\tr(X^T M X) \sum_i\sum_{k, l} M_{kl} \frac{\partial}{\partial X_{rs}} X_{ki} X_{li}
\]
and
\[
\frac{\partial}{\partial X_{rs}}  (X_{k i} X_{l i}) = \begin{cases}
														X_{l i}, & (k,i)=(r,s)\text{ and }(l,i)\neq(r,s)\\
														X_{k i}, & (k,i)\neq (r,s)\text{ and }(l,i)=(r,s)\\ 
														2 X_{r s}, & (k,i)=(r,s)\text{ and }(l,i)=(r,s)\\
														0, & \text{otherwise}.
													   \end{cases}
\]
We have
\[
\frac{\partial g}{\partial X_{rs}} = 4\tr(X^T M X) \sum_{l} M_{rl} X_{ls} = 4\sum_{\substack{1\leq j\leq q\\ 1\leq l, u, v\leq p}} M_{uv} M_{rl} X_{ls} X_{uj} X_{vj}
\]
Next we calculate $\E (\partial g/\partial X_{rs})^2$ when $X$ is i.i.d.\ Gaussian. 
\[
\left(\frac{1}{4}\frac{\partial g}{\partial X_{rs}}\right)^2 = \sum_{\substack{j,l,u,v\\ j',l',u',v'}} M_{u v} M_{u' v'} M_{r l}  M_{r l'} \E  X_{ls} X_{l's} X_{uj} X_{vj} X_{u'j'} X_{v'j'}
\]
In order for the expectation in the summand to be non-zero, we must have one of the following cases: (1) $s\neq j\neq j'$, (2) $s = j \neq j'$, (3) $s = j' \neq j$, (4) $s \neq j = j'$, (5) $s = j = j'$.
We calculate the contribution in each case below.

Case 1: it must hold that $l = l'$, $u=v$ and $u'=v'$. The contribution is $(q-1)(q-2)B^{(1)}_{rs}$, where
\[
B^{(1)}_{rs} = \sum_{l,u,u'} M_{u u} M_{u' u'} M_{r l}^2 = \tr^2(M)\norm{M_{r,\cdot}}_2^2.
\]

Case 2: it must hold that $u'=v'$. The contribution is $(q-1)B^{(2)}_{rs}$, where
\begin{align*}
B^{(2)}_{rs} &= \sum_{l, l', u, u', v} M_{u v} M_{u' u'} M_{r l}  M_{r l'} \E X_{l s} X_{l' s} X_{u s} X_{v s} X_{u' j'}^2 \\
&= \tr(M) \bigg( \tr(M) \norm{M_{r,\cdot}}_2^2 + 2\sum_{l,l'} M_{ll'} M_{rl} M_{rl'}\bigg)
\end{align*}

Case 3: this gives the same bound as Case 2.

Case 4: it must hold that $l = l'$. The contribution is $(q-1)B^{(4)}_{rs}$, where
\[
B^{(4)}_{rs} = \sum_{l, u, u', v, v'} M_{u v} M_{u' v'} M_{r l}^2 \E X_{u j} X_{v j} X_{u' j} X_{v' j} 
    = 3\norm{M_{r,\cdot}}_2^2 \norm{M}_F^2
\]

Case 5: the contribution is $B^{(5)}_{rs}$, where
\[
B^{(5)}_{rs} = \sum_{\substack{l,u,v\\ l',u',v'}} M_{u v} M_{u' v'} M_{r l}  M_{r l'} \E  X_{ls} X_{us} X_{vs} X_{l's} X_{u's} X_{v's}.
\]
The only uncovered case is $l = u'$, $l' = v$, $u = v'$ and its symmetries. In such a case the contribution is at most
\[
C \sum_{l,u,v} M_{uv} M_{lu} M_{rl} M_{rv} = C\sum_u \langle M_{r,\cdot}, M_{u,\cdot}\rangle^2.
\]
Note that
\begin{align*}
\sum_{r,s} B_{rs}^{(1)} &= q\tr^2(M) \norm{M}_F^2, \\
\sum_{r,s} B_{rs}^{(2)} &= q\tr^2(M) \norm{M}_F^2 + 2q\tr(M)\sum_{l,l'} M_{ll'}\langle M_{l,\cdot},  M_{l',\cdot}\rangle\\
&\leq q\tr^2(M) \norm{M}_F^2 + 2q\tr(M) \norm{M}_F \sqrt{\tr(M^4)}, \\
\sum_{r,s} B_{rs}^{(4)} &= q\norm{M}_F^4, \\
\sum_{r,s} B_{rs}^{(5)} &\lesssim \sum_{r,s} B_{rs}^{(1)} + \sum_{r,s} B_{rs}^{(2)} + \tr(M^4).
\end{align*}
Therefore,
\begin{align*}
\frac{1}{16}\E \norm{\nabla g}_2^2 &\leq \sum_{r,s} ((q-1)(q-2)B_{rs}^{(1)} + (q-1)B_{rs}^{(2)} + (q-1)B_{rs}^{(4)} + B_{rs}^{(5)}) \\
&\lesssim q^3 \tr^2(M) \norm{M}_F^2 + q^2 \tr(M) \norm{M}_F \sqrt{\tr(M^4)} + q^2 \norm{M}_F^4 + q\tr(M^4).
\end{align*}
By Poincar\'e's inequality,
\begin{equation}\label{eqn:first_term in V}
\begin{aligned}
&\quad\ \Var_G(\tr^2(G^T MG)) \\
&\lesssim \E\norm{\nabla g}_2^2 \\
&\lesssim q^3 \tr^2(M) \norm{M}_F^2 + q^2 \tr(M) \norm{M}_F \sqrt{\tr(M^4)} + q^2 \norm{M}_F^4 + q\tr(M^4).
\end{aligned}
\end{equation}
Similar to before, each term on the right-hand side grows geometrically.

\subparagraph*{Step 2b.} Next we deal with $\Var_M(\E_G \tr^2\left(G^T MG \right) | M)$.
\[
\E \tr^2\left(G^T MG \right) = \E \bigg(\sum_{i, k, l} M_{kl} G_{ki} G_{li} \bigg)^2 
= \sum_{i,j} \sum_{k,l,k',l'} M_{kl} M_{k'l'} \E G_{ki} G_{li} G_{k'j} G_{l'j}. 
\]
When $i\neq j$, for non-zero contribution, it must hold that $k=l$ and $k'=l'$ and thus the nonzero contribution is
\[
 \sum_{i\neq j} \sum_{k,k'} M_{k k} M_{k'k'} = q(q-1)\tr^2(M).
\]
When $i = j$, the contribution is (this is exactly the same as \eqref{eqn:i=j} in Step 1b.)
\[
\sum_i \sum_{\substack{k, k',l,l'}} M_{kl} M_{k'l'} \E G_{ki} G_{li} G_{k'i} G_{l'i} 
= 2q\norm{M}_F^2 + q\tr^2(M).
\]
Hence
\[
\E \tr^2\left(G^T MG \right) = 2q\norm{M}_F^2 + q^2\tr^2(M)
\]
and when $M$ is random,
\begin{equation}\label{eqn:second_term in V}
\begin{aligned}
&\quad\ \Var\left(\left. \E \tr^2(G^T MG)\right\vert M\right) \\
&= \Var\left(2q\norm{M}_F^2 + q^2\tr^2(M)\right)\\
&\leq 4q^2\Var(\norm{M}_F^2) + q^4\Var(\tr^2(M)) + 2q^3\sqrt{\Var(\norm{M}_F^2) \Var(\tr^2(M))}.
\end{aligned}
\end{equation}

\subparagraph*{Step 3.} Let $U_r$ denote the variance of $\tr((A_r^T A_r)^2)$ and $V_r$ the variance of $\tr^2(A_r^T A_r)$. Combining \eqref{eqn:law of total variance}, \eqref{eqn:Var_G term in U}, \eqref{eqn:second_term_in U}, \eqref{eqn:first_term in V}, \eqref{eqn:second_term in V}, we have the following recurrence relations, where $C_1,C_2,C_3,C_4 > 0$ are absolute constants.
\begin{align*}
U_{r+1} &\leq C_1 P_{r} + 2 U_r + \frac{1}{d_r^2} V_r + \frac{3}{d_r}\sqrt{U_r V_r} \\
V_{r+1} &\leq C_2 Q_{r} + \frac{1}{d_r^2} U_r + V_r + \frac{2}{d_r}\sqrt{U_r V_r} \\
P_{r+1} &\leq C_3 P_r\\
Q_{r+1} &\leq C_4 Q_r\\
U_0 &= V_0 = 0
\end{align*}
In the base case, set $M = I_p$ (the $p\times p$ identity matrix in~\eqref{eqn:Var_G term in U}) and note that the second term in~\eqref{eqn:law of total variance} vanishes. We see that $P_1 \lesssim (p^3 q + p q^3)/d_1^4$  after proper normalization. (Alternatively we can calculate this precisely, see Appendix~\ref{sec:variance r = 2}.) Similarly we have $Q_1 \lesssim p^3q^3/d_1^4$. Note that $Q_1/d_1^2 \lesssim (p^3 q + p q^3)/d_1^4$. Now, we can solve that 
\[
U_{r+1} \leq C^r \frac{p^3 q + p q^3}{d_1^4}
\]
for some absolute constant $C>0$.

\bibliographystyle{plainurl}
\bibliography{reference}

\begin{thebibliography}{10}

\bibitem{akemann2015recent}
Gernot Akemann and Jesper~R Ipsen.
\newblock Recent exact and asymptotic results for products of independent
  random matrices.
\newblock {\em Acta Physica Polonica B}, pages 1747--1784, 2015.

\bibitem{alaoui2014fast}
Ahmed~El Alaoui and Michael~W. Mahoney.
\newblock Fast randomized kernel ridge regression with statistical guarantees.
\newblock In {\em Proceedings of the 28th International Conference on Neural
  Information Processing Systems - Volume 1}, NIPS'15, page 775–783,
  Cambridge, MA, USA, 2015. MIT Press.

\bibitem{bellman1954limit}
Richard Bellman.
\newblock Limit theorems for non-commutative operations. {I}.
\newblock {\em Duke Mathematical Journal}, 21(3):491--500, 1954.

\bibitem{benettin1984power}
Giancarlo Benettin.
\newblock Power-law behavior of lyapunov exponents in some conservative
  dynamical systems.
\newblock {\em Physica D: Nonlinear Phenomena}, 13(1-2):211--220, 1984.

\bibitem{concentration_book}
St\'ephane Boucheron, G\'abor Lugosi, and Pascal Massart.
\newblock {\em Concentration Inequalities: A Nonasymptotic Theory of
  Independence}.
\newblock Oxford University Press, 2013.

\bibitem{brennan2020phase}
Matthew Brennan, Guy Bresler, and Dheeraj Nagaraj.
\newblock Phase transitions for detecting latent geometry in random graphs.
\newblock {\em Probability Theory and Related Fields}, pages 1215--1289, 2020.
\newblock \href {https://doi.org/10.1007/s00440-020-00998-3}
  {\path{doi:10.1007/s00440-020-00998-3}}.

\bibitem{bubeck2015testing}
S\'ebastien Bubeck, Jian Ding, Ronen Eldan, and Mikl\'os~Z. R\'acz.
\newblock Testing for high-dimensional geometry in random graphs.
\newblock {\em Random Structures \& Algorithms}, 49(3):503--532, 2016.
\newblock \href {https://doi.org/10.1002/rsa.20633}
  {\path{doi:10.1002/rsa.20633}}.

\bibitem{bubeck2018entropic}
S\'ebastien Bubeck and Shirshendu Ganguly.
\newblock {Entropic CLT and Phase Transition in High-dimensional Wishart
  Matrices}.
\newblock {\em International Mathematics Research Notices}, 2018(2):588--606,
  12 2016.
\newblock \href {https://doi.org/10.1093/imrn/rnw243}
  {\path{doi:10.1093/imrn/rnw243}}.

\bibitem{Burda_2010}
Z.~Burda, R.~A. Janik, and B.~Waclaw.
\newblock Spectrum of the product of independent random gaussian matrices.
\newblock {\em Physical Review E}, 81(4), Apr 2010.
\newblock \href {https://doi.org/10.1103/physreve.81.041132}
  {\path{doi:10.1103/physreve.81.041132}}.

\bibitem{candes2008restricted}
Emmanuel~J Candes.
\newblock The restricted isometry property and its implications for compressed
  sensing.
\newblock {\em Comptes rendus mathematique}, 346(9-10):589--592, 2008.

\bibitem{crisanti2012products}
Andrea Crisanti, Giovanni Paladin, and Angelo Vulpiani.
\newblock {\em Products of random matrices: in Statistical Physics}, volume
  104.
\newblock Springer Science \& Business Media, 2012.

\bibitem{donoho2006compressed}
David~L. Donoho.
\newblock Compressed sensing.
\newblock {\em IEEE Transactions on Information Theory}, 52(4):1289--1306,
  2006.

\bibitem{iida1990statistical}
S.~Iida, H.A. Weidenm{\"u}ller, and J.A. Zuk.
\newblock Statistical scattering theory, the supersymmetry method and universal
  conductance fluctuations.
\newblock {\em Annals of Physics}, 200(2):219--270, 1990.

\bibitem{ipsen2015products}
Jesper~R. Ipsen.
\newblock {\em Products of independent Gaussian random matrices}.
\newblock PhD thesis, Bielefeld University, 2015.

\bibitem{ishitani1977central}
Hiroshi Ishitani.
\newblock A central limit theorem for the subadditive process and its
  application to products of random matrices.
\newblock {\em Publications of the Research Institute for Mathematical
  Sciences}, 12(3):565--575, 1977.

\bibitem{Jiang_2006}
Tiefeng Jiang.
\newblock How many entries of a typical orthogonal matrix can be approximated
  by independent normals?
\newblock {\em The Annals of Probability}, 34(4):1497--1529, Jul 2006.
\newblock \href {https://doi.org/10.1214/009117906000000205}
  {\path{doi:10.1214/009117906000000205}}.

\bibitem{jiang:late}
Tiefeng Jiang and Yutao Ma.
\newblock Distances between random orthogonal matrices and independent normals.
\newblock {\em Transactions of the American Mathematical Society},
  372(3):1509--1553, August 2019.
\newblock \href {https://doi.org/10.1090/tran/7470}
  {\path{doi:10.1090/tran/7470}}.

\bibitem{kannan2009spectral}
Ravindran Kannan and Santosh Vempala.
\newblock {\em Spectral algorithms}.
\newblock Now Publishers Inc, 2009.

\bibitem{kapralov2016fake}
Michael Kapralov, Vamsi Potluru, and David Woodruff.
\newblock How to fake multiply by a gaussian matrix.
\newblock In {\em International Conference on Machine Learning}, pages
  2101--2110. PMLR, 2016.

\bibitem{mahoney2011randomized}
Michael~W. Mahoney.
\newblock Randomized algorithms for matrices and data.
\newblock {\em Found. Trends Mach. Learn.}, 3(2):123–224, February 2011.
\newblock \href {https://doi.org/10.1561/2200000035}
  {\path{doi:10.1561/2200000035}}.

\bibitem{majumdar2014top}
Satya~N Majumdar and Gr{\'e}gory Schehr.
\newblock Top eigenvalue of a random matrix: large deviations and third order
  phase transition.
\newblock {\em Journal of Statistical Mechanics: Theory and Experiment},
  2014(1):P01012, 2014.

\bibitem{may1972will}
Robert~M May.
\newblock Will a large complex system be stable?
\newblock {\em Nature}, 238(5364):413--414, 1972.

\bibitem{mello1988macroscopic}
P.A. Mello, P.~Pereyra, and N.~Kumar.
\newblock Macroscopic approach to multichannel disordered conductors.
\newblock {\em Annals of Physics}, 181(2):290--317, 1988.

\bibitem{muller2002asymptotic}
Ralf~R Muller.
\newblock On the asymptotic eigenvalue distribution of concatenated
  vector-valued fading channels.
\newblock {\em IEEE Transactions on Information Theory}, 48(7):2086--2091,
  2002.

\bibitem{muthukrishnan2005data}
Shanmugavelayutham Muthukrishnan.
\newblock {\em Data streams: Algorithms and applications}.
\newblock Now Publishers Inc, 2005.

\bibitem{obozinski2011support}
Guillaume Obozinski, Martin~J Wainwright, Michael~I Jordan, et~al.
\newblock Support union recovery in high-dimensional multivariate regression.
\newblock {\em The Annals of Statistics}, 39(1):1--47, 2011.

\bibitem{osborn2004universal}
James~C Osborn.
\newblock Universal results from an alternate random-matrix model for {QCD}
  with a baryon chemical potential.
\newblock {\em Physical review letters}, 93(22):222001, 2004.

\bibitem{paladin1986scaling}
G~Paladin and A~Vulpiani.
\newblock Scaling law and asymptotic distribution of lyapunov exponents in
  conservative dynamical systems with many degrees of freedom.
\newblock {\em Journal of Physics A: Mathematical and General}, 19(10):1881,
  1986.

\bibitem{paul2014random}
Saurabh Paul, Christos Boutsidis, Malik Magdon-Ismail, and Petros Drineas.
\newblock Random projections for linear support vector machines.
\newblock {\em ACM Transactions on Knowledge Discovery from Data (TKDD)},
  8(4):1--25, 2014.

\bibitem{racz2016smooth}
Mikl\'os~Z. R\'acz and Jacob Richey.
\newblock A smooth transition from wishart to goe.
\newblock {\em Journal of Theoretical Probability}, pages 898--906, 2019.
\newblock \href {https://doi.org/10.1007/s10959-018-0808-2}
  {\path{doi:10.1007/s10959-018-0808-2}}.

\bibitem{raskutti2016statistical}
Garvesh Raskutti and Michael~W Mahoney.
\newblock A statistical perspective on randomized sketching for ordinary
  least-squares.
\newblock {\em The Journal of Machine Learning Research}, 17(1):7508--7538,
  2016.

\bibitem{sarlos2006improved}
Tamas Sarlos.
\newblock Improved approximation algorithms for large matrices via random
  projections.
\newblock In {\em 2006 47th Annual IEEE Symposium on Foundations of Computer
  Science (FOCS'06)}, pages 143--152. IEEE, 2006.

\bibitem{trace_inequalities}
Khalid Shebrawi and Hussein Albadawi.
\newblock Trace inequalities for matrices.
\newblock {\em Bulletin of the Australian Mathematical Society},
  87(1):139–148, 2013.
\newblock \href {https://doi.org/10.1017/S0004972712000627}
  {\path{doi:10.1017/S0004972712000627}}.

\bibitem{tao2012topics}
Terence Tao.
\newblock {\em Topics in random matrix theory}, volume 132.
\newblock American Mathematical Soc., 2012.

\bibitem{vershynin2011introduction}
Roman Vershynin.
\newblock Introduction to the non-asymptotic analysis of random matrices.
\newblock In Yonina~C. Eldar and Gitta Kutyniok, editors, {\em Compressed
  Sensing: Theory and Applications}, pages 210--268. Cambridge University
  Press, 2012.
\newblock \href {https://doi.org/10.1017/CBO9780511794308.006}
  {\path{doi:10.1017/CBO9780511794308.006}}.

\bibitem{woodruff2014sketching}
David~P. Woodruff.
\newblock Sketching as a tool for numerical linear algebra.
\newblock {\em Found. Trends Theor. Comput. Sci.}, 10(1–2):1–157, October
  2014.
\newblock \href {https://doi.org/10.1561/0400000060}
  {\path{doi:10.1561/0400000060}}.

\bibitem{yang2020reduce}
Fan Yang, Sifan Liu, Edgar Dobriban, and David~P Woodruff.
\newblock How to reduce dimension with {PCA} and random projections?
\newblock arXiv:2005.00511 [math.ST], 2020.

\end{thebibliography}

\appendix
\section{Omitted Calculations in Section~\ref{sec:mean}}\label{sec:omitted_mean}
\begin{gather*}
S_1(p,q) = 3\sum_i \E B_{1i}^4 + 3\sum_{i\neq j} \E B_{1i}^2 B_{1j}^2 = 3d T_1(p,d) + 3d(d-1)T_4(p,d)\\
\begin{aligned}
S_3(p,q) = \E A_{11}^2 A_{21}^2 &= \E \bigg(\sum_i B_{1i}G_{i1}\bigg)^2 \bigg(\sum_k B_{2k} G_{k1}\bigg)^2 \\
&= \sum_{i,j,k,l} \E B_{1i}B_{1j}B_{2k}B_{2l} \E G_{i1}G_{j1}G_{k1}G_{l1} \\
&= 3\sum_i \E B_{1i}^2 B_{2i}^2 + \sum_{i\neq j} \E B_{1i}^2 B_{2j}^2 + 2\sum_{i\neq j} \E B_{1i}B_{2i}B_{1j}B_{2j} \\
&= 3dT_3(p,d) + d(d-1)T_5(p,d) + 2d(d-1)T_6(p,d)
\end{aligned}\\
\begin{aligned}
S_4(p,q) = \E A_{11}^2 A_{12}^2 &= \E \bigg(\sum_i B_{1i}G_{i1}\bigg)^2 \bigg(\sum_k B_{1k} G_{k2}\bigg)^2 \\
&= \sum_{i,j,k,l} \E B_{1i}B_{1j}B_{1k}B_{1l} \E G_{i1}G_{j1}G_{k2}G_{l2} \\
&= \sum_i \E B_{1i}^4 + \sum_{i\neq j} \E B_{1i}^2 B_{1j}^2 
= d T_1(p,d) + d(d-1)T_4(p,d).
\end{aligned}\\
\begin{aligned}
S_5(p,q) = \E A_{11}^2 A_{22}^2 &= \E \bigg(\sum_i B_{1i}G_{i1}\bigg)^2 \bigg(\sum_k B_{2k} G_{k2}\bigg)^2 \\
&= \sum_{i,j,k,l} \E B_{1i}B_{1j}B_{2k}B_{2l} \E G_{i1}G_{j1}G_{k2}G_{l2} \\
&= \sum_{i,j} B_{1i}^2 B_{2j}^2
= d T_3(p,d) + d(d-1) T_5(p,d)
\end{aligned}\\
\begin{aligned}
S_6(p,q) = 
\E A_{11}A_{12}A_{21}A_{22} &= \sum_{i,j,k,l} \E B_{1i}B_{1j}B_{2k}B_{2l} \E G_{i1}G_{j2}G_{k1}G_{l2} \\
&= \sum_i \E B_{1i}^2 B_{2i}^2 + \sum_{i\neq j} \E B_{1i} B_{1j} B_{2i} B_{2j} \\
&= d T_3(p,d) + d(d-1)T_6(p,d)
\end{aligned}
\end{gather*}

\section{Omitted Calculations in Section~\ref{sec:variance}}\label{sec:omitted_variance}

\subparagraph{In Step 1a.} 
\begin{gather*}
\begin{aligned}
B^{(2)}_{r,s} &= \sum_{l,l'} \sum_{u,v,v'} M_{uv}M_{uv'}M_{rl} M_{rl'}\E X_{us}^2 X_{vj} X_{lj} X_{v'j} X_{l'j} \\
&= \underbrace{\sum_{l\neq l'}\sum_u M_{ul}M_{ul'}M_{rl} M_{rl'}}_{v=l \neq v'=l'} + \underbrace{\sum_{l}\sum_{\substack{u\\ v\neq l}} M_{uv}^2 M_{rl}^2}_{v=v'\neq l=l'} + \underbrace{\sum_{l\neq l'}\sum_u M_{ul'} M_{ul} M_{rl} M_{rl'}}_{v=l'\neq l=v'}\\
&\qquad\qquad\qquad+ 3\underbrace{\sum_{l, u} M_{ul}^2 M_{rl}^2}_{v=v'=l=l'} \\
&= \left(\sum_{u,v} M_{uv}^2\right)\left(\sum_l M_{rl}^2\right) + 2\sum_{l,l',u} M_{ul'} M_{ul} M_{rl} M_{rl'} \\
&= \norm{M}_F^2 \norm{M_{r,\cdot}}_2^2 + 2\sum_u \langle M_{u,\cdot},M_{r,\cdot}\rangle^2.
\end{aligned}\\
\begin{aligned}
B^{(3)}_{r,s} &= \sum_{j'\neq s}\left[\sum_{l, l'} \sum_{u,v} M_{u v} M_{u'l'} M_{rl} M_{rl'}\E X_{us} X_{u's} X_{vs} X_{ls} X_{l'j'}^2\right]\\
&= \underbrace{\sum_{l, l'}\sum_{u\neq l} M_{ul} M_{u l'} M_{rl} M_{r l'}}_{u=u'\neq v=l} +
 \underbrace{\sum_{l, l'}\sum_{u\neq l} M_{uu} M_{l l'} M_{rl} M_{rl'}}_{u=v\neq u'=l} +
  \underbrace{\sum_{l, l'} \sum_{v} M_{l v} M_{vl'} M_{rl} M_{rl'}}_{u=l\neq u'=v} \\
&\qquad +
  3\underbrace{\sum_{l, l'} M_{l l} M_{l l'} M_{rl} M_{rl'}}_{u=u'=v=l} \\
&= \sum_{l, l'}\sum_{u} (2M_{ul} M_{ul'} + M_{uu} M_{l l'}) M_{rl} M_{rl'} \\
&= \sum_{l, l'} (2\langle M_{l,\cdot}, M_{l',\cdot}\rangle + \tr(M) M_{l l'}) M_{rl} M_{rl'}.
\end{aligned}\\
\begin{aligned}
\sum_{r,s} B^{(3)}_{r,s} &= \sum_{r,s} \sum_{l, l'}(2\langle M_{l,\cdot}, M_{l',\cdot}\rangle + \tr(M) M_{l'l'}) M_{rl} M_{rl'}\\
&= q \sum_{l, l'} (2\langle M_{l,\cdot}, M_{l',\cdot}\rangle + \tr(M) M_{l l'}) \sum_{r} M_{rl} M_{rl'} \\
&= q\sum_{l, l'} (2\langle M_{l,\cdot}, M_{l',\cdot}\rangle + \tr(M) M_{l l'}) \langle M_{l,\cdot},M_{l',\cdot}\rangle \\
&= q\sum_{l, l'} 2\langle M_{l,\cdot}, M_{l',\cdot}\rangle^2 + q\tr(M)  \sum_{l, l'} M_{l l'}\langle M_{l,\cdot},M_{l',\cdot}\rangle \\
&\leq 2q\tr(M^4) + q \tr(M) \bigg(\sum_{l,l'} M_{l l'}^2\bigg)^{\frac{1}{2}} \bigg(\sum_{l,l'} \langle M_{l,\cdot}, M_{l',\cdot}\rangle^2 \bigg)^{\frac{1}{2}} \\
&\leq 2q\tr(M^4) + q \tr(M) \norm{M}_F \sqrt{\tr(M^4)}
\end{aligned}
\end{gather*}

\subparagraph{In Step 1b.}
\begin{align*}
&\quad\ \sum_i \sum_{\substack{k,l\\ k',l'}} M_{kl} M_{k'l'} \E G_{ki} G_{li} G_{k'i} G_{l'i} \\
&= \sum_i \bigg( \underbrace{\sum_{k\neq l} M_{kl}^2}_{k=k'\neq l=l'} + \underbrace{\sum_{k\neq l} M_{kl}^2}_{k=l'\neq k'=l} + \underbrace{\sum_{k\neq l} M_{kk}M_{k'k'}}_{k=l\neq k'=l'} + 3\underbrace{\sum_{k} M_{kk}^2}_{k=k'=l=l'}\bigg)  \\
&= \sum_i \bigg( \sum_{k, l} M_{kl}^2 + \sum_{k, l} M_{kl}^2 + \sum_{k, l} M_{kk}M_{k'k'} \bigg)  \\
&= \sum_i (2\norm{M}_F^2 + \tr^2(M))\\
&= 2q\norm{M}_F^2 + q\tr^2(M).
\end{align*}

\subparagraph{In Step 2a.}
\begin{align*}
B^{(2)}_{rs} &= \sum_{l, l', u, u', v} M_{u v} M_{u' u'} M_{r l}  M_{r l'} \E X_{l s} X_{l' s} X_{u s} X_{v s} X_{u' j'}^2 \\
&= \underbrace{\sum_{\substack{l\neq u\\ u'}} M_{u u} M_{u' u'} M_{r l}^2}_{l=l'\neq u=v} + \underbrace{\sum_{\substack{l\neq l'\\ u'}} M_{l l'} M_{u' u'} M_{r l}  M_{r l'}}_{l=u\neq l'=v}  + \underbrace{\sum_{\substack{l\neq l'\\ u'}} M_{l l'} M_{u' u'} M_{r l}  M_{r l'}}_{l=v\neq l'=u} \\
&\qquad\qquad\qquad + 3\underbrace{\sum_{l,u'} M_{l l} M_{u' u'} M_{r l}^2}_{l=u= l'=v}\\
&= \tr(M) \left(\sum_{l,u} M_{uu} M_{rl}^2 + 2\sum_{l,l'} M_{ll'} M_{rl} M_{rl'}\right)\\
&= \tr(M) \left( \tr(M) \norm{M_{r,\cdot}}_2^2 + 2\sum_{l,l'} M_{ll'} M_{rl} M_{rl'}\right)
\end{align*}

\begin{align*}
B^{(4)}_{rs} &= \sum_{l, u, u', v, v'} M_{u v} M_{u' v'} M_{r l}^2 \E X_{u j} X_{v j} X_{u' j} X_{v' j} \\
    &= \left(\sum_l M_{r l}^2\right)\left(\underbrace{\sum_{u, v} M_{u v}^2}_{u=u'\neq v=v'} + \underbrace{\sum_{u, v} M_{u v}^2}_{u=v'\neq u=v} + \underbrace{\sum_{u, v} M_{u v}^2}_{u=v\neq u'=v'} + \underbrace{3\sum_{u} M_{u u}^2}_{u=v = u'=v'}\right) \\
    &= 3\left(\sum_l M_{r l}^2\right)\sum_{u,v} M_{u,v}^2 \\
    &= 3\norm{M_{r,\cdot}}_2^2 \norm{M}_F^2
\end{align*}


\section{Exact Variance when $r=2$}\label{sec:variance r = 2}

Suppose that $A$ is rotationally invariant under both left- and right-multiplication of an orthogonal matrix. Define
\begin{align*}
U_1(p, q) &= \Var( (A^TA)_{ii}^2 )\\
U_2(p, q) &= \Var( (A^TA)_{ij}^2 )\quad i\neq j \\
U_3(p, q) &= \cov( (A^TA)_{ii}^2, (A^TA)_{ik}^2 ) \quad i\neq k \quad \text{(same row, one entry on diagonal)} \\
U_4(p, q) &= \cov(  (A^TA)_{ij}^2, (A^TA)_{ik}^2 ) \quad j\neq k \quad \text{(same row, both entries off-diagonal)} \\
U_5(p, q) &= \cov(  (A^TA)_{ii}^2, (A^TA)_{jj}^2 ) \quad i\neq j \quad \text{(diff.\ rows and cols, both entries on diagonal)} \\
U_6(p, q) &= \cov(  (A^TA)_{ii}^2, (A^TA)_{jk}^2 ) \quad i\neq j\neq k \quad \text{(diff.\ rows and cols, one entry on diagonal)} \\
U_7(p, q) &= \cov(  (A^TA)_{ij}^2, (A^TA)_{kl}^2 ) \quad i\neq j\neq k\neq l \quad \text{(diff.\ rows and cols, nonsymmetric around  diag.)}
\end{align*}
It is clear that they are well-defined.
\begin{align*}
&\quad\ \Var(\tr((A^T A)^2)) \\
&= \Var\bigg( \sum_{i,j} (A^TA)_{ij}^2 \bigg)\\
&= \sum_{i,j,k,l} \cov( (A^TA)_{ij}^2, (A^TA)_{kl}^2 ) \\
&= \sum_{i,j} \Var( (A^TA)_{ij}^2 ) + 2\sum_i\sum_{j\neq l} \cov( (A^TA)_{ij}^2, (A^TA)_{il}^2 )  + \sum_{\substack{i\neq k\\ j\neq l}}\cov(\E (A^TA)_{ij}^2, (A^TA)_{kl}^2)\\
&= q\Var((A^TA)_{11}^2) + q(q-1)\Var(\E(A^TA)_{12}^2) \\
&\qquad + 2\left[ 2q(q-1)\cov((A^TA)_{11}^2, (A^TA)_{12}^2) + q(q-1)(q-2)\cov((A^TA)_{12}^2, (A^TA)_{13}^2) \right] \\
&\qquad + q(q-1)\cov(\E(A^TA)_{11}^2,(A^TA)_{22}^2) + q(q-1)\cov(\E (A^T A)_{12}^2, (A^TA)_{21}^2) \\
&\qquad\quad + 2q(q - 1)(q - 2)\cov((A^TA)_{11}^2, (A^TA)_{23}^2) \\ 
&\qquad\quad + 2q(q - 1)(q - 2)\cov((A^TA)_{12}^2, (A^TA)_{31}^2) \\
&\qquad\quad + q(q-1)(q-2)(q-3)\E(A^TA)_{12}^2(A^TA)_{34}^2\\
&= q U_1(p,q) + q(q-1) U_2(p,q) + 2q(q-1)(2U_3(p,q) + (q-2)U_4(p,q)) \\
&\qquad + q(q-1)(U_5(p,q) + U_2(p,q)) + 2q(q-1)(q-2)(U_6(p,q) + U_4(p,q)) \\
&\qquad + q(q-1)(q-2)(q-3)U_7(p,q) \\
&= q U_1(p,q) + q(q-1) (2U_2(p,q) + 4U_3(p,q) + U_5(p,q))  \\
&\qquad + 2q(q-1)(q-2)(2U_4(p,q) + U_6(p,q)) + q(q-1)(q-2)(q-3)U_7(p,q).
\end{align*}
Let us calculate $U_1,\dots,U_7$ for a $p\times q$ Gaussian random matrix $G$.
\begin{gather*}
\begin{aligned}
U_1(p,q) = \E(G^T G)_{11}^4 - (\E(G^TG)_{11}^2)^2 &= \E \norm{G_1}_2^8 - (\E\norm{G_1}_2^4)^2 \\
&= p(p+2)(p+4)(p+6) - (p(p+2))^2\\
&= 8p(p+2)(p+3)
\end{aligned}\\
\begin{aligned}
U_2(p,q) = \E(G^T G)_{12}^4 - (\E(G^TG)_{12}^2)^2 
&= \E \left(\sum_r G_{r1}G_{r2}\right)^4 - (\E\langle G_1,G_2\rangle^2)^2\\
&= \sum_{r,s,t,u} \E G_{r1} G_{s1} G_{t1} G_{u1} G_{r2} G_{s2} G_{t2} G_{u2} - p^2\\
&= 3\sum_{r\neq t} \E G_{r1}^2 G_{t1}^2 G_{r2}^2 G_{t2}^2  + \sum_r G_{r1}^4 G_{r2}^4 - p^2\\
&= 3p(p-1) + 9p - p^2 = 2p(p+3).
\end{aligned}\\
\begin{aligned}
U_3(p,q) &= \E(G^T G)_{11}^2 (G^T G)_{12}^2 - \E(G^T G)_{11}^2 \E(G^T G)_{12}^2\\
 &= \E (G_1^T G_1)^2 G_1^T G_2 G_2^T G_1 - \E\norm{G_1}_2^4 \E \langle G_1,G_2\rangle^2 \\
&= \E (G_1^T G_1)^2 G_1^T (\E G_2 G_2^T) G_1 - p(p+2)\cdot p\\
&= \E (G_1^T G_1)^3 - p^2(p+2)\\
&= \E \norm{G_1}_2^6 - p^2(p+2) = p(p+2)(p+4) - p^2(p+2) = 4p(p+2)
\end{aligned}\\
\begin{aligned}
U_4(p,q) &= \E(G^T G)_{12}^2 (G^T G)_{13}^2 - \E(G^T G)_{12}^2 \E(G^T G)_{13}^2\\
&= \E G_1^T G_2 G_2^T G_1 G_1^T G_3 G_3^T G_1 - p^2\\
&= \E G_1^T \E(G_2 G_2^T) G_1 G_1^T \E(G_3 G_3^T) G_1 -p^2 \\
&= \E (G_1^T G_1)^2 - p^2 = \E \norm{G_1}_2^4 - p^2 = p(p+2) - p^2 = 2p
\end{aligned}\\
\begin{aligned}
U_5(p,q) &= \E(G^T G)_{11}^2 (G^T G)_{22}^2 - \E(G^T G)_{11}^2 \E(G^T G)_{22}^2 \\
&= \E \norm{G_1}_2^4 \norm{G_2}_2^4 - \E \norm{G_1}_2^4 \norm{G_2}_2^4 = 0
\end{aligned}\\
\begin{aligned}
U_6(p,q) &= \E(G^T G)_{11}^2 (G^T G)_{23}^2 - \E(G^T G)_{11}^2 \E (G^T G)_{23}^2 \\
&= \E \norm{G_1}_2^4 \langle G_2,G_3\rangle^2 -\E \norm{G_1}_2^4 \E \langle G_2,G_3\rangle^2 = 0
\end{aligned}\\
\begin{aligned}
U_7(p,q) &= \E(G^T G)_{12}^2 (G^T G)_{34}^2 - \E(G^T G)_{12}^2 \E (G^T G)_{34}^2 \\
&= \E \langle G_1,G_2\rangle^2 \langle G_3,G_4\rangle^2 - \E \langle G_1,G_2\rangle^2 \E \langle G_3,G_4\rangle^2 = 0
\end{aligned}
\end{gather*}
Therefore
\begin{align*}
\Var(\tr((G^T G)^2)) &= q U_1 + q (q - 1) (2 U_2 + 4 U_3 + U_5) + 
 2 q (q - 1) (q - 2) (2 U_4 + U_6) \\
 &\qquad + q (q - 1) (q - 2) (q - 3) U_7 \\
 &= q U_1 + q (q - 1) (2 U_2 + 4 U_3) +  4 q (q - 1) (q - 2) U_4 \\
 &= 4 p q (5 + 5 p + 5 q + 2 p^2 + 5 p q + 2 q^2).
\end{align*}
When $r=2$, recalling that $\E(A_2 - A_1) = (1+o(1)) p^2q^2/d^3$ (see~\eqref{eqn:mean_estimate}), we have that
\[
\frac{\sqrt{\Var(\tr((\frac{1}{\sqrt d}G^T \cdot \frac{1}{\sqrt d}G)^2))}}{p^2q^2/d^3} \leq \frac{6d}{\max\{p,q\}^{\frac{1}{2}}\min\{p,q\}^{\frac{3}{2}}}.
\]
If the right-hand side above is at most a small constant $c$, we can distinguish $A_2$ from $A_1$ with probability at least a constant.

\end{document}